\newenvironment{myabstract}{\list{}{\leftmargin=10mm\rightmargin=10mm}\item[]}{\endlist}
\theoremstyle{plain}
\newtheorem{thm}{Theorem}[]
\newtheorem{lem}[thm]{Lemma}
\newtheorem{conjecture}{Conjecture}
\theoremstyle{definition}
\newtheorem{dfn}[thm]{Definition}
\DeclareMathOperator{\Defo}{Def}
\DeclareMathOperator{\Hom}{Hom}
\DeclareMathOperator{\hcf}{hcf}
\DeclareMathOperator{\Ker}{Ker}
\DeclareMathOperator{\Spec}{Spec}
\DeclareMathOperator{\Ext}{Ext}
\DeclareMathOperator{\shExt}{\underline{\mathit{Ext}}}
\DeclareMathOperator{\shHom}{\underline{\mathit{Hom}}}
\newcommand{\cB}{\mathcal{B}}
\newcommand{\bu}{\mathbf{u}}
\newcommand{\one}{\mathbf{1}}
\newcommand{\oo}{\mathcal{O}}
\newcommand{\QQ}{{\mathbb{Q}}}
\newcommand{\RR}{{\mathbb{R}}}
\newcommand{\PP}{{\mathbb{P}}}
\newcommand{\ZZ}{{\mathbb{Z}}}
\newcommand{\CC}{\mathbb{C}}
\newcommand{\TT}{\mathbb{T}}
\newcommand{\sX}{\mathfrak{X}}
\newcommand{\cX}{{\mathfrak{X}^\text{can}}}
\newcommand{\cXi}{{\mathfrak{X}^\text{can}_i}}
\newcommand{\Cstar}{\CC^\times}
\newcommand{\ve}{\varepsilon}
\newcommand{\hmin}{h_{\text{\it min}}}
\newcommand{\hmax}{h_{\text{\it max}}}
\begin{document}

\title[Mirror Symmetry and the Classification of Orbifold del~Pezzo Surfaces]
{Mirror Symmetry and the Classification of \\Orbifold del~Pezzo Surfaces}

\author[Akhtar]{Mohammad Akhtar}
\author[Coates]{Tom Coates}
\author[Corti]{Alessio Corti}
\author[Heuberger]{Liana Heuberger}
\author[Kasprzyk]{Alexander Kasprzyk}
\author[Oneto]{Alessandro Oneto}
\author[Petracci]{Andrea Petracci}
\author[Prince]{Thomas Prince}
\author[Tveiten]{Ketil Tveiten}

\address{Department of Mathematics\\Imperial College London\\180 Queen's Gate\\London, SW7 2AZ, UK} 
\email{mohammad.akhtar03@imperial.ac.uk}
\email{t.coates@imperial.ac.uk}
\email{a.corti@imperial.ac.uk}
\email{a.m.kasprzyk@imperial.ac.uk}
\email{a.petracci13@imperial.ac.uk}
\email{t.prince12@imperial.ac.uk}

\address{Institut Mathematique de Jussieu\\4 Place Jussieu\\75005 Paris, France}
\email{liana.heuberger@imj-prg.fr}

\address{Department of Mathematics, Stockholm University, SE-106 91, Stockholm, Sweden}
\email{oneto@math.su.se}
\email{ktveiten@math.su.se}

\thanks{Supported by EPSRC grant EP/I008128/1 and ERC Starting Investigator Grant 240123.}

\date{\today}

\keywords{}
\subjclass[2000]{}

\maketitle

\begin{myabstract}
  \footnotesize {\sc Abstract.}
  We state a number of conjectures that together allow one to classify a broad class of del~Pezzo surfaces with cyclic quotient singularities using mirror symmetry.  We prove our conjectures in the simplest cases.  The conjectures relate mutation-equivalence classes of Fano polygons with $\QQ$-Gorenstein deformation classes of del~Pezzo surfaces.\\ \\
\end{myabstract}

\fontsize{10pt}{12.5pt}\selectfont

\noindent We explore mirror symmetry for del~Pezzo surfaces with cyclic quotient singularities. We begin by stating two logically independent conjectures.  In Conjecture~\ref{conjecture:A} we try to imagine what consequences mirror symmetry may have for classification theory.  In Conjecture~\ref{conjecture:B} we make what we mean by mirror symmetry precise.  
This work owes a great deal to conversations with Sergey Galkin, and to the pioneering papers by Galkin--Usnich~\cite{GU} and Gross--Hacking--Keel~\cite{GHK11, GHK12}.

\section*{Basic Concepts}

\noindent Consider a del~Pezzo surface $X$ with isolated cyclic
quotient singularities. $X$ is analytically locally (or \'etale
locally if you prefer) isomorphic to a quotient $\CC^2/\mu_n$, where
without loss of generality $\mu_n$ acts with weights $(1,q)$ with
$\hcf (q,n)=1$. We denote the quotient\footnote{We think of quotient singularities
  themselves as either analytic germs or formal algebraic germs
  $(x,X)$.} of $\CC^2$ by this action by
$\frac{1}{n}(1,q)$. There is a canonical way to regard $X$ as a non-singular
Deligne--Mumford stack with non-trivial isotropy only at isolated
points; we will denote this stack by $\sX$, writing $X$ for the
underlying variety.  The canonical class of $X$ is a $\QQ$-Cartier
divisor and thus it makes sense to say that $X$ is a \emph{del~Pezzo
  surface}, that is, that the anti-canonical divisor $-K_X$ is ample.

There is a notion of \emph{$\QQ$-Gorenstein (qG) deformation} of varieties with quotient
singularities, and  of miniversal
qG-deformation~\cite{MR922803, MR1144527} . The smallest positive integer $r$ such that $rK_X$ is
Cartier is called the \emph{Gorenstein index}. If $S$ is the spectrum
of a local Artin ring, the key defining properties of a qG-deformation
$f \colon \mathcal{X}\to S$ of $(x,X)$ are flatness and that
$rK_{\mathcal{X}/S}$ be a relative Cartier divisor, where
$K_{\mathcal{X}/S}$ is the relative canonical class. Thus, for qG-deformations, the invariant $K_X^2$ of fibres is
locally constant on the base, and hence, for a qG-deformation of a del~Pezzo surface,
$h^0(X,-K_X)$ of fibres is also locally constant on the base. For a
quotient singularity $\frac{1}{n}(1,q)$ write $q=p-1$, $w=\hcf(n,p)$, $n=wr$,
$p=wa$; then $r$ is the Gorenstein index and we call $w$ the
\emph{width} of the
singularity~\cite{AK14}. It is easy to see that $\frac1{n}(1,q)$ is
\[
(xy+z^w=0)\subset \textstyle \frac1{r}(1,wa-1,a)
\]
where $x$, $y$, $z$ are the standard co-ordinate functions on
$\CC^3$.  Write $w=mr+w_0$ with $0\leq w_0<r$. It is known~\cite{MR922803, MR1144527} that the base of the
miniversal qG-deformation\footnote{The moduli space of arbitrary flat deformations of $\frac1{n}(1,q)$ has many components.  Little is known about these components in general, but the distinguished component corresponding to qG-deformations is smooth and reduced.} of $\frac1{n}(1,q)$ is isomorphic to $\CC^{m-1}$
and, choosing co-ordinate functions $a_1,\dots,a_{m-1}$ on it, the miniversal qG-family is
given explicitly by the equation:
\[
\bigl(xy+(z^{rm}+a_1z^{r(m-2)}+\cdots + a_{m-1})z^{w_0}=0\bigr)\subset
\textstyle \frac1{r}(1,w_0a-1,a)\times \CC^{m-1} 
\]

We say that $\frac1{n}(1,q)$ is of \emph{class T} or is a \emph{$T$-singularity} if $w_0=0$, and
that it is a \emph{primitive $T$-singularity} if $w_0=0$ and $m=1$.  $T$-singularities appear in the work of Wahl~\cite{MR597833} and Koll\'ar--Shepherd-Barron~\cite{MR922803}.  We say that $\frac1{n}(1,q)$ is of \emph{class R} or is a \emph{residual singularity} if $m=0$, that is, if $w=w_0$.  We say that the singularity 
\[
\textstyle \frac1{w_0r}(1,w_0a-1)=(xy+z^{w_0}=0)\subset \frac1{r}(1,w_0a-1,a)
\] 
is the \emph{R-content} of $\frac1{n}(1,q)$ and that the pair
$\bigl(m,\frac1{w_0r}(1,w_0a-1)\bigr)$ of a non-negative integer and a
singularity is the \emph{singularity content} of
$\frac1{n}(1,q)$.  Residual singularities and singularity content appear in the work of Akhtar--Kasprzyk~\cite{AK14}.  The generic fibre of the miniversal family of $\frac1{n}(1,q)$ has a
unique singularity of class R, the R-content, and a singularity is
\emph{qG-rigid} if and only if it is of class R. At the opposite end of
the spectrum, a singularity is of class T if and only if it admits a
qG-smoothing.

In our formulation below, one side of mirror symmetry consists of the
set of qG-deformation classes of locally qG-rigid del~Pezzo surfaces,
that is, of del~Pezzo surfaces with residual singularities. In order to
make sense of the other side of mirror symmetry, we need to discuss
mutations of Fano polygons.  Fix a lattice $N\cong \ZZ^d$ and its dual lattice $M=\Hom(N, \ZZ)$.  
A \emph{Fano polytope} is a convex lattice polytope $P\subset N_\RR$ such that:
\begin{enumerate}
\item[1.] the origin $0\in N$ lies in the strict interior of $P$;
\item[2.] the vertices $\rho_i\in N$ of $P$ are primitive lattice vectors.
\end{enumerate}
For a Fano polygon $P$ we
denote by $X_P$ the toric variety defined by the spanning fan of $P$; this is a del~Pezzo surface with cyclic quotient
singularities. There is a notion of \emph{combinatorial mutation}~\cite{MR3007265} of lattice polytopes, which we now describe in the special case of lattice polygons.  Let $P\subset N$ be a lattice polygon.  
\emph{Mutation data} \label{def:mutation} for $P$ is the choice of primitive\footnote{In the original work~\cite{MR3007265}, the vector $f$ was not required to be primitive.  Any combinatorial mutation in the original sense can be written as a composition of mutations with primitive $f$.} vectors $h\in
M$ and $f\in h^\perp \subset N$ satisfying the following two
conditions. Denote by $\hmax>0$ and $\hmin<0$ the maximum and minimum
values of $h$ on $P$. Choose an orientation of $N$
and label the vertices of $P$ by $\rho_1, \rho_2, \dots$
counterclockwise, such that $h(\rho_1)=\hmax$. The conditions are:
\begin{itemize}
\item there is an edge $E_i=[\rho_i,\rho_{i+1}]$ such that $h(\rho_i) = h(\rho_{i+1}) = \hmin$; 
\item $\rho_{i+1}-\rho_i=wf$ where $w\geq {-\hmin}$ is an integer. 
\end{itemize}
Informally, to mutate $P$ we just add $kf$ at height $k\geq 0$, and
take away $-kf$ at height $k<0$. The conditions on the mutation data
simply mean that it is possible to take away $-kf$ at height $k<0$.
In describing precisely the construction of the mutation of $P$ we
distinguish two cases:
\begin{enumerate}[(I)]
\item[I.] $P$ has $m$ vertices, $\rho_1, \dots, \rho_m$, and $\rho_1$ is the
  unique maximum for $h$ on $P$;
\item[II.] $P$ has $m+1$ vertices $\rho_1, \dots, \rho_{m+1}$, and
  $h(\rho_1)=h(\rho_{m+1})=\hmax$. 
\end{enumerate}
The \emph{mutation} of $P$ with respect to the mutation data $(h,f)$ is the Fano polygon $P^\prime$ with vertices:
\begin{align*}
  & \rho^\prime_j=
    \begin{cases}
      \rho_j & \;\text{if $1\leq j \leq i$} \\
      \rho_j+h(\rho_j) f & \;\text{if $i < j \leq m$} \\
      \rho_1+\hmax f & \;\text{if $j = m+1$}
    \end{cases}
                       \intertext{in case~I, and}
  & \rho^\prime_j=
    \begin{cases}
      \rho_j & \;\text{if $1\leq j \leq i$} \\
      \rho_j+h(\rho_j) f & \;\text{if $i < j \leq m$} \\
      \rho_{m+1}+\hmax f & \;\text{if $j = m+1$}
    \end{cases}
\end{align*}
in case~II.

The definition of mutation becomes more transparent if we consider $Q \subset M$, the \label{def:dual_mutation} polygon dual to $P$.  Let $\psi\colon M \to M$ be the piecewise-linear map defined by:
\[
\psi (u) = u-\min\big({\langle f,u \rangle},0 \big) \, h
\]
If $Q^\prime$ denotes the dual to the mutated polygon $P^\prime$,
then $Q^\prime =\psi (Q)$.

\section*{Conjecture A}

\begin{dfn}
  \label{dfn:4}
  A del~Pezzo surface with cyclic quotient singularities is of class
  TG (for Toric Generization) if it admits a
  qG-degeneration with reduced fibres to a normal toric del~Pezzo surface. 
\end{dfn}

Not all locally qG-rigid del~Pezzo surfaces with cyclic quotient
singularities are of class TG. Consider, for example, the complete
intersection $X_{6,6}\subset \PP(2,2,3,3,3)$.  This surface has $4$
singularities of type $\frac1{3}(1,1)$, and degree $K_X^2=\frac1{3}$; it is not of class
TG because $h^0(X,-K_X)=h^0\bigl(X,\oo_X(1)\bigr)=0$. It is
an open and apparently difficult question to give a meaningful
characterization of surfaces of class TG.  

\begin{dfn}
  \label{dfn:equiv}
  Fano polygons $P$, $P^\prime$ are \emph{mutation
    equivalent} if there is a sequence of combinatorial mutations that starts from $P$ and
  ends at $P^\prime$.  Del~Pezzo surfaces $X$, $X^\prime$ with cyclic quotient
  singularities are \emph{qG-deformation equivalent} if there exist
  qG-families $f_i\colon \mathcal{X}_i\to S_i$ over connected schemes $S_i$, $1 \leq i \leq n$, and points $t_i$,~$s_i\in S_i$ such that we have the following equalities of scheme-theoretic inverse images:
  \begin{align*}
    X=f_1^* (t_1) && 
                         \text{$f_i^* (s_i)=f_{i+1}^* (t_{i+1})$ for $1 \leq i < n$} &&
                                                                                                  f_{n}^* (s_n)=X^\prime
  \end{align*}
\end{dfn}

\noindent Lemma~\ref{lem:1} below states, in particular, that qG-deformations
of del~Pezzo surfaces with cyclic quotient singularities are
unobstructed. Thus it would suffice to take $n=1$ in
Definition~\ref{dfn:equiv}.

\begin{conjecture} \label{conjecture:A} There is a one-to-one correspondence between:
  \begin{itemize}
  \item the set $\mathfrak{P}$ of mutation equivalence classes of Fano
     polygons; and
  \item the set $\mathfrak{F}$ of qG-deformation equivalence classes
    of locally qG-rigid class~TG del~Pezzo surfaces with cyclic quotient
    singularities.
  \end{itemize}
  The correspondence sends $P$ to a (any) generic qG-deformation of
  the toric surface $X_P$.
\end{conjecture}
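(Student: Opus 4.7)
The plan is to build a map $\Phi \colon \mathfrak{P} \to \mathfrak{F}$ sending $[P]$ to the qG-deformation class of a generic qG-deformation of $X_P$, and to verify in turn that $\Phi$ is well-targeted, constant on mutation classes, surjective, and injective.

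For the target: by Lemma~\ref{lem:1} (unobstructedness of qG-deformations), the miniversal qG-deformation space of $X_P$ is smooth, and locally at each singular point it factors as the product of the miniversal qG-bases $\CC^{m_i-1}$ recalled above. A generic point of this product qG-smooths the $T$-part of every singularity while leaving each $R$-content rigid, so the generic fibre is locally qG-rigid and, by construction, of class~TG, hence defines an element of $\mathfrak{F}$. For well-definedness on mutation classes, it suffices to treat a single mutation $P \rightsquigarrow P^\prime$. Following the Altmann--Ilten philosophy of toric deformations via Minkowski decompositions, the mutation data $(h,f)$ with edge $E_i$ of lattice length $w\geq -\hmin$ should produce a one-parameter qG-family over $\PP^1$ whose fibres at $0$ and $\infty$ are $X_P$ and $X_{P^\prime}$; passing to the generic qG-deformation of the total space then yields $\Phi([P])=\Phi([P^\prime])$.

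Surjectivity is almost immediate from the class~TG hypothesis: a surface $X\in\mathfrak{F}$ by definition admits a qG-degeneration with reduced fibres to a toric surface $X_P$, and local qG-rigidity of $X$ forces it to lie in the generic qG-stratum of the miniversal qG-deformation of $X_P$, so $[X]=\Phi([P])$.

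The main obstacle is injectivity: given qG-deformation equivalent generic deformations of $X_P$ and $X_{P^\prime}$, one must manufacture a sequence of combinatorial mutations from $P$ to $P^\prime$. My approach would be to concatenate the qG-families of Definition~\ref{dfn:equiv} into a single family $\mathcal{X}\to C$ over a connected base, identify the toric locus $C^{\mathrm{tor}}\subset C$, and show that its connected components are related pairwise by elementary mutations. This amounts to a wall-and-chamber description of the qG-moduli of locally qG-rigid del~Pezzo surfaces, with chambers labelled by Fano polygons and walls labelled by mutation data. This is the deepest part of the statement and is essentially the mirror-symmetric content of Conjecture~\ref{conjecture:B}: the natural strategy is to use the classical period of an associated maximally mutable Laurent polynomial as a mutation-invariant separating mutation classes on $\mathfrak{P}$, and to identify it with an intrinsic invariant on $\mathfrak{F}$ (the regularised quantum period of a generic qG-deformation), thereby controlling the combinatorics of the toric locus.
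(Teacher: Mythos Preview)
The statement you are attempting to prove is labelled Conjecture~A in the paper precisely because the paper does \emph{not} prove it in full.  What the paper establishes is Theorem~\ref{thm:1}: the assignment $[P]\mapsto[\text{generic qG-deformation of }X_P]$ gives a well-defined \emph{surjection} $\mathfrak{P}\to\mathfrak{F}$.  Your first three steps (well-targeted, mutation-invariant, surjective) match the paper's argument for Theorem~\ref{thm:1} closely: Lemma~\ref{lem:1} supplies unobstructedness and the local-to-global smoothness you invoke; Lemma~\ref{lem:2} is exactly the one-parameter qG-pencil over $\PP^1$ you describe, and the paper proves it by an explicit construction in Cox coordinates rather than by appeal to the Altmann--Ilten framework (note that Ilten's result does not give the qG property, which is the new content here and requires checking by hand); surjectivity is immediate from the definition of class~TG together with the connectedness of the miniversal base.

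Your fourth step, injectivity, is not a proof but a programme.  The paper is explicit that injectivity is ``the real content of Conjecture~A'' and leaves it open.  Your proposed wall-and-chamber picture of the qG-moduli, with toric chambers connected by mutation walls, is exactly the geometric reformulation the paper gives after the statement of the conjecture, but neither you nor the paper supplies an argument for it.  Your fallback---use the classical period of a maximally-mutable Laurent polynomial as a mutation invariant and match it to the regularised quantum period---is precisely the route via Conjectures~\ref{conjecture:B} and~\ref{conjecture:C}, both of which are themselves open in general.  So at the injectivity step you have reduced one conjecture to two others, not proved it; this is consistent with the paper, which notes that Conjectures~\ref{conjecture:B} and~\ref{conjecture:C} together imply Conjecture~\ref{conjecture:A}, but it is not a proof.
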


\noindent We will prove half of Conjecture~\ref{conjecture:A} below:

\begin{thm}
  \label{thm:1}
  The assignment, to a Fano polygon $P$, of a (any) generic qG-deformation 
  of the toric surface $X_P$ defines a surjective map
  $\mathfrak{P} \to \mathfrak{F}$. 
\end{thm}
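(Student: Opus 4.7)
My plan is to prove two statements in turn: that the rule $P\mapsto(\text{a generic qG-deformation of }X_P)$ factors through mutation-equivalence and so defines a map $\mathfrak{P}\to\mathfrak{F}$, and that the resulting map is surjective. The principal black box on both sides will be Lemma~\ref{lem:1}, which asserts that qG-deformations of a del~Pezzo surface with cyclic quotient singularities are unobstructed; in particular the base of the miniversal qG-deformation of any such surface is smooth, and so locally irreducible at the basepoint.

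Well-definedness reduces, since mutation-equivalence is generated by single mutations, to the case where $P'$ is obtained from $P$ by one combinatorial mutation with data $(h,f)$. Here I would construct a flat $\QQ$-Gorenstein family $\mathcal{Y}\to\PP^1$ with $\mathcal{Y}_0\cong X_P$ and $\mathcal{Y}_\infty\cong X_{P'}$. The construction is essentially toric: the data $(h,f)$, together with the balancing condition $w\geq-\hmin$, assemble into a rational polyhedral fan on $N\oplus\ZZ$ whose slices over $0$ and $\infty$ recover the spanning fans of $P$ and $P'$ respectively (this is the polytope-mutation construction of Ilten, seen most transparently in the dual picture via the piecewise-linear map $\psi$). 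Once this family is exhibited and shown to be $\QQ$-Gorenstein over $\PP^1$, versality places $X_P$, $X_{P'}$ and the generic fibre of $\mathcal{Y}$ in the same smooth component of the miniversal qG-deformation base, and generic qG-deformations of $X_P$ and of $X_{P'}$ are qG-deformation equivalent.

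For surjectivity, let $X$ be a locally qG-rigid class~TG del~Pezzo surface. By Definition~\ref{dfn:4}, $X$ admits a qG-degeneration with reduced fibres, over a connected base, to a normal toric del~Pezzo surface $X_P$ for some Fano polygon $P$. This already places $X$ and $X_P$ in a common qG-deformation-equivalence class. To match $X$ with a \emph{generic} qG-deformation of $X_P$, I invoke Lemma~\ref{lem:1}: the miniversal qG-deformation base $B$ of $X_P$ is smooth, and the given degeneration of $X$ induces a morphism from a punctured neighbourhood of the basepoint of $B$, hitting both $X_P$ and, for nearby nonzero parameters, a point representing $X$. Since $B$ is smooth hence locally irreducible, $X$ and every sufficiently general qG-deformation of $X_P$ occur as fibres of the universal family over $B$, and are therefore qG-deformation equivalent.

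The main technical obstacle is the construction in the first step: producing a family realising a single mutation which is $\QQ$-Gorenstein and not merely flat. Flatness with toric special fibres $X_P$, $X_{P'}$ is combinatorial, but the $\QQ$-Gorenstein property --- a uniform Cartier index for the relative canonical class --- must be verified at each torus-invariant point of the threefold lying over $0$ or $\infty$. This is where the primitivity of $f$ and the inequality $w\geq-\hmin$ built into the definition of mutation data enter in an essential way. Everything downstream of this construction reduces to formal applications of versality and of the unobstructedness in Lemma~\ref{lem:1}.
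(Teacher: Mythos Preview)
Your plan matches the paper's proof closely: well-definedness via a qG-pencil over $\PP^1$ realising a single mutation (this is the paper's Lemma~\ref{lem:2}, which indeed refines Ilten's construction precisely by verifying the $\QQ$-Gorenstein condition in an explicit toric chart, much as you anticipate), and surjectivity from the definition of class~TG together with unobstructedness.

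There is one genuine omission. You never check that a generic qG-deformation of $X_P$ actually lands in $\mathfrak{F}$, i.e.\ that it is \emph{locally qG-rigid}. Unobstructedness alone does not give this: smoothness of the miniversal base says nothing about which singularities persist on the generic fibre. What is needed is the \emph{second} conclusion of Lemma~\ref{lem:1}, namely that the morphism
\[
\Defo_{qG} X_P \longrightarrow \prod_i \Defo_{qG}(x_i,X_P)
\]
is formally smooth. This guarantees that a generic global qG-deformation restricts to a generic local qG-deformation at each singular point, so each $\frac{1}{n_i}(1,q_i)$ is replaced by its R-content and the resulting surface has only residual singularities. Your summary of Lemma~\ref{lem:1} mentions only the first conclusion (unobstructedness), and nowhere in your argument do you invoke the local-to-global surjectivity; without it you have a map from $\mathfrak{P}$ to qG-deformation classes of del~Pezzo surfaces, but not to $\mathfrak{F}$. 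The paper uses exactly this in the opening sentence of its proof.
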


 The real content of Conjecture~A is the statement that the map
$\mathfrak{P} \to \mathfrak{F}$ is injective. This is a strong
statement about the structure of the boundary of the stack of del~Pezzo surfaces. In Lemma~\ref{lem:2} below, we attach to a mutation between
Fano polygons $P$ and $P^\prime$ a special qG-pencil $g\colon \mathcal{X} \to
\PP^1$ with scheme-theoretic fibres $g^* (0) = X_P$ and $g^* (\infty)
= X_{P^\prime}$. By construction all fibres of $g$ come with an action
of $\CC^\times$; indeed they are $T$-varieties in the sense of Altmann~\emph{et al.}~\cite{AIPSV,AH,AHS}.
Conjecture~A states that, if the toric surfaces $X_P$
and $X_{P^\prime}$ are deformation equivalent, then the corresponding
points in the moduli stack are connected by a chain of $\PP^1$s
given by such special qG-pencils.

\section*{Conjecture B}

\noindent Let $P$ be a Fano polygon and $X$ a generic qG-deformation of
the surface $X_P$. The second of our two conjectures relates the
quantum cohomology of $X$ to the variation of homology of fibres of
certain Laurent polynomials with Newton polygon $P$. We introduce the
key ingredients that we need in order to state it. We begin by describing the
quantum cohomology side.  

The surface $X$ is a del~Pezzo surface with cyclic quotient
singularities.  Denote the singularities by
$(x_j, X)\cong \frac1{n_j}(1,q_j)$, $j \in J$, where $J$ is an index
set.  Let $\sX$ denote the surface $X$ but regarded as a smooth
Deligne--Mumford stack with isotropy only at the points
$x_j$,~$j \in J$.  Let $H_X$ denote the Chen--Ruan orbifold cohomology
of $\sX$, that is, the cohomology of the inertia stack $I \sX$ with
shifted grading.  As a vector space, we have:
\begin{align*}
  H_X=\left(\bigoplus_kH^{2k} (X;\CC) \right) \oplus \left(\bigoplus_{j \in J} H^{\text{tw}}_{x_j}\right)  
  && \text{where}
  && H^{\text{tw}}_{x_j} = \bigoplus_i \CC \one_{i,j} 
\end{align*}
and the index $i$ in the definition of the `twisted sector'
$H^{\text{tw}}_{x_j}$ runs over the set of non-zero elements in
$\frac1{n_j}\ZZ/\ZZ$.  The element $ \one_{i,j}$ has degree
$\big\{\frac{i}{n_j}\big\} + \big\{\frac{i q_j}{n_j}\big\}$, where
$\{x\}$ denotes the fractional part of the rational number $x$, and
elements of $H^{2k} (X;\CC) \subset H_X$ have degree $k$.

Given $\alpha_1,\ldots,\alpha_n \in H_X$, non-negative integers
$k_1,\ldots,k_n$, and $\beta \in H_2(X;\QQ)$, one can consider the
genus-zero Gromov--Witten invariant of $\sX$:
\[
\bigl\langle \alpha_1 \psi_1^{k_1},\ldots,\alpha_n\psi_n^{k_n}
\bigr\rangle_{0,n,\beta}
\]
This is defined in~\cite{MR1950941, MR2172496, MR2450211}; roughly
speaking, it counts the number of genus-zero degree-$\beta$ orbifold
curves in $\sX$, passing through various cycles in $\sX$ and with
isotropy specified by $\alpha_1,\ldots,\alpha_n$.  Denoting by
$\bu_1, \dots, \bu_s$ those classes $\one_{i,j}$ with
$0<\deg \one_{i,j}< 1$ in some order, the \emph{quantum period} of
$\sX$ is the power series:
\[
G_\sX (x,q)=\sum_{\beta \in H_2(X;\QQ)} \sum_{n=0}^\infty
\sum_{1\leq i_1, \dots ,i_n\leq s}
\Bigg\langle \bu_{i_1},\ldots, \bu_{i_n},
\frac{[\text{pt}]}{1-\psi_{n+1}}  \Bigg\rangle_{0,n+1,\beta} 
\frac{x_{i_1}\cdots x_{i_n}}{n!} \, q^\beta
\]
Composing with the substitution
$q^\beta \mapsto t^{-K_\sX\cdot \beta}$,
$x_i \mapsto x_i t^{1- \deg \bu_i}$ defines a formal power
series\footnote{The formula for the virtual dimension of the moduli
  space of stable maps to $\sX$~\cite{MR2104605} ensures that the
  powers of $t$ occurring in $G_\sX$ are integral.  In this context both $G_\sX(x, t)$ and $\widehat{G}_\sX (x,t)$ are
  elements of $\QQ[x_1,\ldots, x_s][\![t]\!]$; see \cite{OP14} for details.}:
\begin{align*}
  G_\sX (x,t)&=\sum_{d=0}^\infty  c_d(x)t^d \\
  \intertext{The \emph{regularized quantum period} of $\sX$
  is:}
  \widehat{G}_\sX(x, t)& =\sum_{d=0}^\infty d!\,c_d(x)\,t^d
\end{align*}

This concludes our description of the quantum cohomology side of
Conjecture~\ref{conjecture:B}; we now describe the other side. We consider Laurent polynomials
\[
g=\sum_{\gamma \in N\cap P} a_\gamma x^\gamma
\]
with Newton polygon equal to the Fano polygon $P$.  

Let $h\in M$ and $f\in h^\perp\subset N$ be mutation data for $P$. The \emph{cluster transformation}
\[
\Phi \colon x^\gamma \mapsto x^\gamma (1+x^f)^{\langle \gamma, h\rangle}
\]
defines an automorphism of the field of fractions
$\CC(N)$ of $\CC[N]$, and we say that the Laurent polynomial $g\in \CC[N]$
is \emph{mutable} with respect to $(h,f)$ if $g\circ \Phi$ lies in $\CC[N]$.  It is easy to see that if $g$ is mutable then the Newton polygon of $g^\prime := g\circ \Phi$ is
the mutated polygon $P^\prime$.

\begin{dfn}
  Let $P$ be a Fano polygon\footnote{Kasprzyk--Tveiten have defined
    the correct notion of maximal-mutability for Laurent polynomials
    in more than two variables: see~\cite{KT14}.  The
    many-variables case presents many new features.} and let $g \in \CC[N]$,
  \[
  g=\sum_{\gamma \in N\cap P} a_\gamma x^\gamma
  \]
  be a Laurent polynomial with Newton polygon $P$.  We say that $g$ is \emph{maximally-mutable} if:
  \begin{itemize}
  \item for each positive integer $n$ and each sequence of mutations
    \[
    P_0 \to P_1 \to P_2 \to \cdots \to P_n
    \]
    with $P_0 = P$, there exist Laurent polynomials $g_i \in \CC[N]$
    with $g_0 = g$ such that the Newton polygon of $g_i$ is $P_i$ and
    the cluster transformation $\Phi_i$ determined by the mutation
    $P_i \to P_{i+1}$ satisfies $g_i \circ \Phi_i = g_{i+1}$.
  \item $a_0=0$; this is just a convenient normalization condition.
  \end{itemize}
\end{dfn}

\noindent The set of maximally-mutable Laurent polynomials with Newton polygon
$P$ is a vector space over $\CC$ that we denote by $L_P$.

We say that the Laurent polynomial $g$ has \emph{$T$-binomial edge coefficients} if successive coefficients $a_\gamma$ along each edge of $P$ of height $r$ and width $w$, where $w = mr+w_0$ with $0 \leq w_0<r$, are successive coefficients of $T$ in
\[
\begin{cases}
  (1+T)^{mr} & \text{if $w_0=0$} \\
  (1+T)^{mr} (1+T^{w_0}) & \text{if $w_0\ne 0$.} \\
\end{cases}
\]
If $g$ has $T$-binomial edge coefficients and $\rho$ is a vertex of
$P$ then the coefficient $a_\rho = 1$.  If $X_P$ has only
$T$-singularities (that is, in the language of
Definition~\ref{dfn:singularity_content} below, if the basket $\cB$ of
$P$ is empty) then $T$-binomial edge coefficients are binomial
coefficients.

\noindent Kasprzyk--Tveiten have shown that, for any Fano polygon $P$,
the set of maximally-mutable Laurent polynomials with Newton polygon
$P$~\cite{KT14} and $T$-binomial edge coefficients is an affine
subspace of $L_P$ that we denote by $L_P^T$.

There is a universal maximally-mutable Laurent polynomial:
\begin{equation}
  \label{eq:universal}
  \begin{aligned}
    \xymatrix{ L_P^T\times \TT \ar[r]^-g\ar[d]_{\text{pr}_1} & \CC \\
      L_P^T &}
  \end{aligned}
\end{equation}
where $\TT=\Spec \CC[N]$,  which we consider to be the
\emph{Landau--Ginzburg model}\footnote{More accurately, \eqref{eq:universal} is a torus chart on the
  Landau--Ginzburg mirror to $X$. One can use cluster transformations
  to glue different copies of $\TT$ to form a variety $Y$, and use the corresponding mutations to identify the different affine spaces $L_P^T \cong L_{P'}^T$. The
  maximally-mutable Laurent polynomials then define a global function
  $G\colon L_P^T\times Y \to \CC$. We will not pursue this here.}  mirror to a generic qG-deformation $X$ of the
surface $X_P$.
The \emph{classical period} of $P$ is the function of $a \in L_P^T$ and $t \in \CC$ defined by
\[
\pi_P (a,t)= \oint_{|x_1|=|x_2|=1} \frac1{1-tg(a,x)} \, \Omega
\]
where $\Omega$ is the invariant volume form on $\TT$ normalized such
that $ \oint_{|x_1|=|x_2|=1} \Omega=1$.

\begin{conjecture} \label{conjecture:B} Let $P$ be a Fano polygon and
  let $X$ be a generic qG-deformation of the toric surface $X_P$. Let
  $L_P^T$ denote the affine space of maximally-mutable Laurent
  polynomials with Newton polygon $P$ and $T$-binomial edge
  coefficients, and let $H_X^{\text{ts}} \subset H_X$ denote the
  twisted sectors of age less than~$1$:
  \[
  H_X^{\text{ts}}=\bigoplus_{i=1}^{r} \CC \bu_i
  \] 
  There is an affine-linear isomorphism
  $\varphi \colon L_P^T \to H_X^{\text{ts}}$, the mirror map, such
  that the regularized quantum period $\widehat{G}_\sX$ of $\sX$ and
  the classical period $\pi_P$ of $P$ satisfy\footnote{We think of
    $\widehat{G}_\sX$ and $\pi_P$ as functions from $H_X^{\text{ts}}$
    and $L_P^T$ to $\CC[\![t]\!]$.}
  $\widehat{G}_\sX\circ \varphi = \pi_P$.
\end{conjecture}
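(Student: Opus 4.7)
The overall strategy is to reduce the statement to a computation at a distinguished toric basepoint and then propagate both the mirror map and the period identity over the whole parameter space $L_P^T$. The plan is to (i) match dimensions on the two sides via the singularity content dictionary, (ii) construct $\varphi$ as an affine-linear map by reading off edge/interior parameters of the Laurent polynomial against twisted sectors, (iii) verify $\widehat{G}_\sX\circ\varphi = \pi_P$ at the distinguished basepoint using the toric orbifold mirror theorem, and (iv) upgrade this pointwise equality to an equality of power series in $a \in L_P^T$ by a Picard--Fuchs / quantum differential equation argument.

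For the dimension count, I would compute $\dim H_X^{\text{ts}}$ locally: at a singularity $\frac1{n_j}(1,q_j)$ with singularity content $\bigl(m_j, \frac1{w_{0,j}r_j}(1,w_{0,j}a_j-1)\bigr)$, the age-less-than-one twisted sectors decompose according to the generic qG-deformation into contributions that see only the R-content. On the Laurent polynomial side, the T-binomial edge coefficient condition fixes precisely the binomial part of the edge that corresponds to the $T$-part $(1+T)^{m_j r_j}$, leaving exactly the residual degrees of freedom. The mirror map $\varphi$ should be constructed edge-by-edge (and vertex-by-vertex after fixing the vertex coefficient to be $1$) so that the coefficient attached to each edge of $P$ matches, up to a canonical affine shift, the twisted sector class attached to the singularity dual to that edge. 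Checking that the resulting $\varphi$ is genuinely affine-linear reduces to verifying that the natural $L_P^T$-coordinates and the orbifold cohomology coordinates both linearize the deformation/extended K\"ahler parameters in a compatible way.

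For the period identity I would use the toric orbifold mirror theorem of Coates--Corti--Iritani--Tseng applied to $\sX_P$, which identifies a suitable $I$-function of $\sX_P$ with an oscillatory/period integral computed from a Hori--Vafa-type Laurent polynomial supported on $P$. Restricting both sides to the locus where only twisted insertions of age less than one are turned on matches exactly the regularised quantum period $\widehat{G}_\sX$ with the classical period $\pi_P$ at the basepoint where $a$ corresponds to the canonical $T$-binomial Laurent polynomial. To spread this equality over all of $L_P^T$ I would argue that both sides satisfy the same regular singular ODE system in the $a$-parameters: on the $A$-side this is the (reduced, age-less-than-one) quantum $D$-module for the generic qG-deformation $X$, and on the $B$-side it is the Picard--Fuchs system for the family of affine curves $\{g(a,x)=t^{-1}\}$. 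Unobstructedness of qG-deformations (Lemma~\ref{lem:1}) and deformation invariance of genus-zero Gromov--Witten invariants under qG-deformations of a del~Pezzo with cyclic quotient singularities then let me pass from $X_P$ to the generic $X$ without changing $\widehat{G}_\sX$.

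The main obstacle, I expect, is the final step: matching the quantum $D$-module of $\sX$ with the Picard--Fuchs $D$-module of the maximally-mutable family. Three sub-difficulties need to be isolated. First, deformation invariance of the quantum period under qG-deformation is not automatic for orbifolds because the inertia stack jumps; one must check that the restriction to age-less-than-one sectors is insensitive to the qG-smoothings of $T$-components of singularities (which is exactly what the $T$-binomial edge condition is designed to encode). Second, mutation invariance of $\pi_P(a,t)$ in the $a$-direction, which is clear torus-chart-by-torus-chart, must be shown to patch into a single global function on $H_X^{\text{ts}}$, so that $\varphi$ is genuinely well-defined independently of the Fano polygon chosen in its mutation class. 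Third, the affine-linearity of $\varphi$ is a strong statement: a priori one only expects a formal mirror map, and showing it is affine-linear likely requires a grading/degree argument using that both sides carry a natural homogeneity under $t\mapsto \lambda t$ coming from the anti-canonical grading.
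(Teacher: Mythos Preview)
The statement you are attempting to prove is labelled as a \emph{conjecture} in the paper, and the paper does not provide a proof of it in general. There is therefore no proof in the paper against which to compare your proposal. What the paper offers instead is evidence in two restricted settings: the smooth case (singularity content $(n,\varnothing)$) and the case of surfaces with only $\frac{1}{3}(1,1)$ singularities. Even in the latter case the paper records only a pointwise statement (Theorem~\ref{thm:4}) of the form $\widehat{G}_\sX(x_0,t)=\pi_P(a_0,t)$ at a single pair of basepoints, and that result is itself conditional on conjectural generalizations of the Quantum Lefschetz Principle and the Abelian/non-Abelian Correspondence to orbifolds. The full statement, with an affine-linear mirror map $\varphi$ identifying the whole of $L_P^T$ with $H_X^{\text{ts}}$, is left open.

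Your outline is a reasonable sketch of how one might attack the conjecture, and you have correctly identified the principal obstacles; but those obstacles are genuine gaps, not technicalities. The most serious is your step~(iii)/(iv): the toric orbifold mirror theorem applies to the toric stack over $X_P$, not to the generic qG-deformation $\sX$, and your proposed fix---deformation invariance of Gromov--Witten invariants under qG-deformation---is precisely the delicate point, since the inertia stack (hence the very space $H_X^{\text{ts}}$ on which $\widehat{G}_\sX$ is defined) changes along the deformation. The paper signals this difficulty by remarking that computing quantum periods of these orbifolds is hard and that even the $\frac{1}{3}(1,1)$ examples are ``at the limit of what can be treated using currently-available techniques''. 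Likewise, the affine-linearity of $\varphi$ is singled out in the paper as a genuinely strong assertion: it notes that enlarging $H_X^{\text{ts}}$ to include degree-$1$ twisted sectors already destroys affine-linearity for $\PP(1,1,6)$. In short, your proposal is a plausible programme rather than a proof, and the obstacles you list are exactly why the statement remains a conjecture.
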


\noindent This Conjecture makes explicit an insight by Sergey Galkin, who several years ago suggested to us that mutable Laurent polynomials play a fundamental role in mirror symmetry.  

One might try to extend the subspace $H^{\text{\it ts}}_X \subset X$ to include classes of degree~$1$ from the twisted sectors and, correspondingly, to consider maximally-mutable Laurent polynomials with general (rather than $T$-binomial) edge coefficients.  One can formulate a version of Conjecture~\ref{conjecture:B} in this setting but in this case the mirror map $\varphi$ will in general no longer be affine-linear, being defined by a power series with finite radius of convergence.  One can see this already in the case of $X = \PP(1,1,6)$, where the quantum period can be computed using the Mirror Theorem for toric Deligne--Mumford stacks~\cite{CCIT,CCFK}, and the corresponding maximally-mutable Laurent polynomial is $f = x+y+x^{-1} y^{-6} + a_1 y^{-1} + a_2 y^{-2} + a_3 y^{-3}$ where $a_1$,~$a_2$, and~$a_3$ are parameters.

\section*{Two Further Conjectures}

\noindent We complete the picture by stating two further conjectures.

\begin{dfn}[\!\!\cite{AK14}] \label{dfn:singularity_content}
Let $P$ be a Fano polygon and denote the singular points of $X_P$ by $x_j$, $j \in J$.  Let $\big(m_j,\frac{1}{w_{0,j}r_j}(1,a_j w_{0,j}-1)\big)$ be the singularity content of $(x_j,X_P)$.  The \emph{singularity content} of $P$ is the pair $\bigr(m,\cB\bigl)$ where $m=\sum m_j$ and the multiset\footnote{In the original work by Akhtar--Kasprzyk $\cB$ is taken to be a cyclically ordered list, but the cyclic order will be unimportant in what follows.}
\[
\cB=\Bigl\{\textstyle\frac{1}{w_{0,j}r_j}(1,a_j w_{0,j}-1) : \text{$j \in J$, $w_{0,j} r_j \ne 1$}\Bigr\}
\]
is the basket of residual singularities of $X_P$. 
\end{dfn}

\noindent The singularity content of $P$ has an equivalent, purely combinatorial definition which we will not give here.  Akhtar--Kasprzyk have shown that the singularity content of $P$ is invariant under mutation.

\begin{conjecture} \label{conjecture:C} Let $P_1$ and $P_2$ be Fano
  polygons with the same singularity content.  Suppose that there is
  an affine-linear isomorphism $\varphi \colon L^T_{P_1} \to L^T_{P_2}$
  such that $\pi_{P_1}(a,t) = \pi_{P_2} (\varphi(a),t)$.  Then $P_2$
  is obtained from $P_1$ by a chain of mutations.
\end{conjecture}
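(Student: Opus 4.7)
The natural strategy is to combine Conjectures~\ref{conjecture:A} and~\ref{conjecture:B} to reduce Conjecture~C to a reconstruction problem on the algebro-geometric side. By Conjecture~\ref{conjecture:B} applied to $P_1$ and $P_2$, there exist generic qG-deformations $X_1,X_2$ of $X_{P_1},X_{P_2}$ and mirror maps $\varphi_i\colon L^T_{P_i}\to H_{X_i}^{\text{ts}}$ with $\widehat{G}_{\sX_i}\circ \varphi_i=\pi_{P_i}$. The hypothesis $\pi_{P_1}(a,t)=\pi_{P_2}(\varphi(a),t)$ then translates into an equality of regularized quantum periods $\widehat{G}_{\sX_1}=\widehat{G}_{\sX_2}\circ\psi$, where $\psi=\varphi_2\circ\varphi\circ\varphi_1^{-1}\colon H_{X_1}^{\text{ts}}\to H_{X_2}^{\text{ts}}$ is the induced affine-linear isomorphism. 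The assumption on singularity content guarantees that the baskets of residual singularities of $\sX_1$ and $\sX_2$ coincide, so the gradings on $H_{X_1}^{\text{ts}}$ and $H_{X_2}^{\text{ts}}$ match, and the discrete invariants of the two stacks line up.

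The central step is to show that equality of regularized quantum periods, together with matching residual baskets, forces $\sX_1$ and $\sX_2$ to be qG-deformation equivalent. This is a Givental-type reconstruction statement: one expects the quantum period, viewed as a function of the twisted-sector parameters, to determine the surface up to qG-deformation within the locus of locally qG-rigid class~TG del~Pezzo surfaces with a fixed basket. The leading coefficients of $\widehat{G}_\sX$ in $t$ already recover the degree $K_X^2$ and low-order genus-zero Gromov--Witten invariants of $\sX$; combined with deformation invariance of GW theory, one hopes that these separate qG-deformation classes in our very restricted setting.

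Once $\sX_1$ and $\sX_2$ are qG-deformation equivalent, the injective direction of Conjecture~\ref{conjecture:A} returns to the combinatorial side and yields that $P_1$ and $P_2$ are mutation equivalent. The proof is therefore conditional on Conjectures~\ref{conjecture:A} and~\ref{conjecture:B}, with the reconstruction step as the additional ingredient bridging them; it has the appealing feature that Conjecture~C becomes a consistency check for the mirror-symmetric picture as a whole.

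The hard part, and where essentially all the difficulty resides, is the reconstruction step. Even for smooth del~Pezzo surfaces, recovering the qG-deformation class from the regularized quantum period alone is subtle and not known in general; in the orbifold setting the twisted-sector contributions add substantial complications, and no general reconstruction result of this kind is currently available. A more direct combinatorial attack would try to extract, from the low-order coefficients of $\pi_P(a,t)$, enough invariants of $P$ to pin down its mutation class given the singularity content, thereby bypassing mirror symmetry entirely; but this route appears to be at least as difficult as the reconstruction above, since it is really the same problem stated on the combinatorial side.
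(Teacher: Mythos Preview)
Your argument is a conditional reduction: assume Conjectures~\ref{conjecture:A} and~\ref{conjecture:B}, then derive Conjecture~\ref{conjecture:C} from a reconstruction statement. But that reconstruction statement is essentially Conjecture~\ref{conjecture:D}, and the paper explicitly records that it is \emph{not known} whether Conjectures~\ref{conjecture:A}, \ref{conjecture:B} and~\ref{conjecture:D} together imply Conjecture~\ref{conjecture:C}. The gap is concrete: Conjecture~\ref{conjecture:D} applies only when the identification of twisted sectors is the \emph{obvious} one determined by matching the residual baskets, whereas your map $\psi=\varphi_2\circ\varphi\circ\varphi_1^{-1}$ is merely some affine-linear isomorphism. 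Nothing in Conjecture~\ref{conjecture:B} forces the mirror maps $\varphi_i$ to respect the natural labelling of twisted sectors, so there is no reason $\psi$ should coincide with the obvious identification, and you cannot invoke Conjecture~\ref{conjecture:D} as stated. Your proposal is therefore not a proof even at the conditional level; it is a heuristic for why the four conjectures ought to be mutually consistent, which is exactly how the paper treats the situation.

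The paper does not attempt any general argument of this kind. Conjecture~\ref{conjecture:C} is left open in general, and what the paper actually establishes is the conjecture in the two simplest cases (smooth del~Pezzo surfaces, and surfaces with only $\tfrac{1}{3}(1,1)$ singularities) by a completely different and entirely direct route: classify all mutation-equivalence classes of Fano polygons with the relevant singularity content, observe that in almost every case the singularity content already determines the mutation class, and in the handful of remaining cases compute enough low-order coefficients of the classical periods $\pi_{P_i}(a,t)$ to check explicitly that no affine-linear change of parameters can match them. No mirror symmetry, no Gromov--Witten theory, and no reconstruction principle enters that argument.
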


\begin{conjecture} \label{conjecture:D}
  Let $X_1$ and $X_2$ be del~Pezzo surfaces of class TG with the same set of qG-rigid cyclic quotient singularities, and let $\varphi \colon H_{X_1}^{\text{ts}} \to H_{X_2}^{\text{ts}}$ be the obvious identification.  Suppose that $\widehat{G}_{\sX_1} = \widehat{G}_{\sX_2} \circ \varphi$.  Then $X_1$ and $X_2$ are qG-deformation equivalent.
\end{conjecture}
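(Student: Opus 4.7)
The plan is to derive Conjecture~\ref{conjecture:D} as a formal consequence of Theorem~\ref{thm:1} together with Conjectures~\ref{conjecture:B} and~\ref{conjecture:C}. Under the natural reading that each $X_i$ is locally qG-rigid with only the prescribed common class-R singularities, Theorem~\ref{thm:1} yields Fano polygons $P_1$ and $P_2$ such that $X_i$ is qG-deformation equivalent to a generic qG-deformation of the toric surface $X_{P_i}$. Since the R-content of any singularity of $X_{P_i}$ is precisely the qG-rigid singularity to which it specialises, the baskets $\cB(P_1)$ and $\cB(P_2)$ already agree by hypothesis.

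To conclude that the full singularity contents (Definition~\ref{dfn:singularity_content}) of $P_1$ and $P_2$ coincide, there remains the integer $m(P_i)$. I would read $K_{X_i}^2$ off the low-order coefficients of $\widehat{G}_{\sX_i}$---the anti-canonical degree is controlled by the virtual-dimension formula for the moduli of stable maps---so the hypothesis $\widehat{G}_{\sX_1} = \widehat{G}_{\sX_2}\circ\varphi$ forces $K_{X_1}^2 = K_{X_2}^2$. Combined with the Akhtar--Kasprzyk formula expressing $K_X^2$ in terms of $(m,\cB)$, this pins down $m(P_1) = m(P_2)$.

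Next, apply Conjecture~\ref{conjecture:B} on each side to obtain affine-linear mirror maps $\varphi_i \colon L^T_{P_i} \to H^{\text{ts}}_{X_i}$ with $\widehat{G}_{\sX_i} \circ \varphi_i = \pi_{P_i}$. The hypothesis then yields an affine-linear isomorphism $\psi := \varphi_2^{-1}\circ \varphi \circ \varphi_1 \colon L^T_{P_1} \to L^T_{P_2}$ satisfying $\pi_{P_1}(a,t) = \pi_{P_2}(\psi(a),t)$. Since $P_1$ and $P_2$ have the same singularity content, Conjecture~\ref{conjecture:C} delivers a chain of mutations $P_1 \to \cdots \to P_2$, and the well-definedness half of Theorem~\ref{thm:1} (mutation-equivalent polygons produce qG-deformation equivalent generic qG-deformations) then places $X_1$ and $X_2$ in the same class of $\mathfrak{F}$.

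The main obstacle is of course that Conjectures~\ref{conjecture:B} and~\ref{conjecture:C} are themselves open, so the argument is contingent on them. Within this scheme, the most delicate unconditional step is pinning down $m$ from the quantum period. Granting the conjectures, the remaining subtlety is one of compatibility: one must verify that the abstract twisted-sector identification $\varphi$ in the hypothesis, when transported through the mirror maps $\varphi_i$, really produces an affine-linear $\psi$ on $L^T$---this is what permits Conjecture~\ref{conjecture:C} to be applied, and it reflects the expectation that maximally-mutable Laurent polynomials with $T$-binomial edge coefficients are the correct coordinates on the mirror side.
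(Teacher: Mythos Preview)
The statement is a conjecture, so the paper does not give a general proof. What the paper offers is twofold: a one-line assertion that Conjectures~\ref{conjecture:B} and~\ref{conjecture:C} together imply Conjectures~\ref{conjecture:A} and~\ref{conjecture:D}, and a verification of Conjecture~\ref{conjecture:D} in the two treated cases (smooth del~Pezzo surfaces, and surfaces with only $\tfrac{1}{3}(1,1)$ singularities) by entirely different, classification-based means.

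Your proposal is precisely an attempt to flesh out the implication $\text{B}+\text{C}\Rightarrow\text{D}$, and the skeleton is correct and in line with what the authors intend: surjectivity in Theorem~\ref{thm:1} produces polygons $P_i$; Conjecture~\ref{conjecture:B} supplies affine-linear mirror maps; composing with the (linear) identification $\varphi$ gives an affine-linear $\psi$ intertwining classical periods; Conjecture~\ref{conjecture:C} yields mutation-equivalence; and the well-definedness half of Theorem~\ref{thm:1} closes the loop. The one genuine soft spot, which you yourself flag, is pinning down $m(P_i)$. Your claim that $K_X^2$ can be read off the low-order coefficients of $\widehat{G}_\sX$ is not established anywhere in the paper, and without it you cannot verify the ``same singularity content'' hypothesis of Conjecture~\ref{conjecture:C}. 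The paper does record that $m=e(X^0)$ is the Euler number of the smooth locus, but arguing that equal regularized quantum periods force equal $e(X^0)$ is no more immediate. So this step remains a gap in the conditional argument.

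By contrast, where the paper actually establishes Conjecture~\ref{conjecture:D} (the $\tfrac{1}{3}(1,1)$ case), the method is a finite case-check that does not pass through Conjectures~\ref{conjecture:B} or~\ref{conjecture:C} at all: the classifications in Theorems~\ref{thm:2} and~\ref{thm:3} show that the singularity content already determines the qG-deformation class except in two pairs, and for those pairs the quantum periods are computed directly and shown to differ, so no collision of the form ruled out by Conjecture~\ref{conjecture:D} can occur.
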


\noindent Conjectures~\ref{conjecture:B} and~\ref{conjecture:C}
together imply Conjectures~\ref{conjecture:A} and~\ref{conjecture:D}.
It would be very interesting to know whether
Conjectures~\ref{conjecture:A}, \ref{conjecture:B}
and~\ref{conjecture:D} together imply Conjecture~\ref{conjecture:C}.

\section*{The Proof of Theorem~\ref{thm:1}}
\label{sec:proofs}

\noindent We now prove Theorem~\ref{thm:1}, that is, we prove one half of Conjecture~\ref{conjecture:A}.  We begin with a result on
qG-deformations of del~Pezzo surfaces with cyclic quotient
singularities.

\begin{lem}
  \label{lem:1}
  Let $X$ be a del~Pezzo surface with cyclic quotient
  singularities $(x_i\in X)$. Then qG-deformations of $X$ are unobstructed and,
  denoting by $\Defo_{qG}X$ and $\Defo_{qG}(x_i, X)$ the global and local
  deformation functors, the morphism
\[
\Defo_{qG} X\to \prod_i \Defo_{qG} (x_i,X)
\]
 is formally smooth.
\end{lem}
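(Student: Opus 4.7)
The plan is to apply the local-to-global spectral sequence for qG-deformations, then reduce the two statements to two cohomological vanishings: one for the local qG-obstruction spaces (classical, from the explicit miniversal family recalled in the introduction) and one for $H^2$ of the tangent sheaf (Akizuki--Nakano on the stack $\sX$).

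The qG-deformations of $X$ are governed by tangent and obstruction sheaves $\mathcal{T}^0_{qG}=T_\sX$, $\mathcal{T}^1_{qG}$, $\mathcal{T}^2_{qG}$ on $X$, where for $q\geq 1$ the sheaf $\mathcal{T}^q_{qG}$ is a skyscraper at the $x_i$ whose stalk is the tangent, respectively the obstruction, space of $\Defo_{qG}(x_i,X)$. These fit into a convergent local-to-global spectral sequence $E_2^{p,q}=H^p(X,\mathcal{T}^q_{qG})\Rightarrow T^{p+q}_{qG}(X)$. Because $\mathcal{T}^q_{qG}$ is a skyscraper for $q\geq 1$, in total degree $\leq 2$ only the bottom row and the left column contribute, and the spectral sequence produces the six-term exact sequence
\[
0 \to H^1(T_\sX) \to T^1_{qG}(X) \to \bigoplus_i T^1_{qG}(x_i,X) \to H^2(T_\sX) \to T^2_{qG}(X) \to \bigoplus_i T^2_{qG}(x_i,X) \to 0.
\]

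Next I would verify the two vanishings. Local unobstructedness $T^2_{qG}(x_i,X)=0$ is immediate from the fact that the miniversal qG-family of $\tfrac1n(1,q)$ is explicitly smooth over $\CC^{m-1}$, as recalled in the introduction. For $H^2(\sX,T_\sX)=0$ I would apply Serre duality on the smooth proper two-dimensional Deligne--Mumford stack $\sX$ to rewrite the group as $H^0(\sX,\Omega^1_\sX\otimes\omega_\sX)^\vee$ and then invoke Akizuki--Nakano vanishing on $\sX$ for the ample line bundle $-K_\sX$ in bidegree $(1,0)$. Feeding both vanishings into the exact sequence at once gives $T^2_{qG}(X)=0$, i.e.\ global unobstructedness, and surjectivity of the restriction $T^1_{qG}(X)\twoheadrightarrow \bigoplus_i T^1_{qG}(x_i,X)$ on tangent spaces. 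Since both source and target are smooth deformation functors, the standard tangent-surjectivity criterion in Schlessinger's framework yields formal smoothness of $\Defo_{qG}X\to \prod_i \Defo_{qG}(x_i,X)$, as required.

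The main technical obstacle is not the cohomological input but the correct set-up of the qG tangent--obstruction complex and the corresponding local-to-global spectral sequence. Specifically, one has to identify $\mathcal{T}^0_{qG}$ with the tangent sheaf of the smooth DM stack $\sX$ (so that qG-deformations restrict to honest deformations away from the singular locus) and organise qG deformation theory in Palamodov-style to produce the six-term sequence above; this needs care but is standard. Once this framework is granted, the proof is cohomology-light.
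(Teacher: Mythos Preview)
Your proof is correct and follows the same architecture as the paper's: a local-to-global spectral sequence reduces both claims to (i) local qG-unobstructedness and (ii) vanishing of $H^2$ of the tangent sheaf, after which formal smoothness follows from tangent-surjectivity between smooth functors. The paper differs in two implementation details. First, rather than working with the smooth stack $\sX$ and an ad hoc qG tangent--obstruction complex, the paper identifies qG-deformations of $X$ with ordinary deformations of the canonical covering stack $\cX$ (locally $[Y_i/\mu_{r_i}]$ with $Y_i=(xy+z^{w_i}=0)$), so that the spectral sequence is literally the local-to-global $\Ext$ sequence for $\Omega^1_\cX$; this makes the identification of $\mathcal{T}^0_{qG}$ with $\theta_\cX$ and the skyscraper description of $\mathcal{T}^1_{qG}$ automatic rather than something to be set up by hand. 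Second, for the key vanishing the paper pushes $\theta_\cX$ down to $X$, dualises $H^2$ to $\Hom(-K_X,\Omega^{1\,\vee\vee}_X)$, and kills it by Bogomolov--Sommese for log canonical varieties, whereas you invoke Akizuki--Nakano directly on the orbifold $\sX$. Both routes are valid; the paper's is better documented in the literature (hence the citation of \cite{MR2854859,MR2581247}), while yours is conceptually cleaner provided you can point to a reference for Kodaira--Akizuki--Nakano on smooth proper Deligne--Mumford stacks.
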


\begin{proof} As before, let $(x_i, X)\cong 1/n_i(1,q_i)$ and write
  $q_i=p_i-1$, $w_i=\hcf (n_i,p_i)$, $n_i=w_ir_i$, and $p_i=w_ia_i$. Then
  $r_i$ is the local Gorenstein index at $x_i$ and the surface
  $Y_i$ given by the equation $(xy+z^{w_i}=0)$ in $\CC^3$ (with coordinates $x$,~$y$,~$z$) is the
  local (in the analytic or \'etale topology) canonical cover of
  $(x_i, X)$. Denote by $\cX$ the orbifold with local charts at $x_i$
  given by $\cXi =[Y_i/\mu_{r_i}]$
  at $x_i$. Then the qG-deformation functor of $X$ is the ordinary
  deformation functor of the orbifold $\cX$. Thus we work with the
  ordinary global and local deformation functors $\Defo \cX$, $\Defo
  (x_i, \cXi)$. The functor $\Defo \cX$ is controlled
  by $T^i=\Ext^i(\Omega^1_{\cX}, \oo_{\cX})$ in the standard way,
  and similarly for $\Defo (x_i, \cXi)$. Furthermore for our local
  models $\shExt^1(\Omega^1_{\cXi}, \oo_{\cXi})$ is a skyscraper
  sheaf supported at the singular point with fibre $\CC^{m_i-1}$, and
  all higher $\shExt^i$ vanish. We need to show that
  $\Ext^2(\Omega^1_{\cX}, \oo_{\cX})=0$ and that the natural map
\[
\Ext^1(\Omega^1_\cX, \oo_\cX) \to H^0\bigl(\cX, \shExt^1(\Omega^1_\cX,
\oo_\cX)\bigr)=\bigoplus_i \Ext^1(\Omega^1_{\cXi}, \oo_{\cXi})
\]
is surjective. As we explain in more detail below, this follows
easily from known vanishing theorems and the edge-sequence
of the local-to-global spectral sequence for computing $\Ext$ groups,
where as usual we denote by $\theta_\cX=\shHom (\Omega^1_{\cX},
\oo_\cX)$ the sheaf of derivations of $\cX$:
  \begin{multline*}
 H^1(\cX, \theta_\cX)\to \Ext^1(\Omega^1_\cX,\oo_\cX)\to
H^0\bigl(\cX, \shExt^1(\Omega^1_\cX, \oo_\cX)\bigr)\to \\ \to H^2(\cX,
\theta_\cX)\to \Ext^2_\cX(\Omega^1\cX,\oo_\cX)\to (0)    
  \end{multline*}
  (The last homomorphism here is surjective since all other groups on the
  $E_2$-page of the spectral sequence vanish.) Everything
  follows once we have established that $H^2(\cX, \theta_\cX)=(0)$. Indeed,
  let $\pi\colon \cX\to X$ be the forgetful morphism from the orbifold
  $\cX$ to its coarse moduli space $X$. It is obvious that, for every
  coherent sheaf $\mathcal{F}$ on $\cX$, $H^i(\cX,
  \mathcal{F})=H^i(X,\pi_* \mathcal{F})$. Now $\pi_*
  \theta_\cX$ is a torsion-free sheaf, hence we have an inclusion of
  sheaves
\[
\pi_* \theta_\cX \subset \bigl(\Omega^{1\, \vee \vee}_\cX\otimes
(-K_X)  \bigr)^{\vee \vee}
\]
as the sheaf on the right is saturated and the two sheaves coincide on
the smooth locus of $X$. So everything follows from vanishing of
$H^2\bigl(X,\bigl(\Omega^{1\, \vee \vee}_\cX\otimes (-K_X)  \bigr)^{\vee
  \vee} \bigr)$. But this group is Serre-dual to 
\begin{multline*}
  \Hom \Bigl(\bigl(\Omega^{1\, \vee \vee}_\cX\otimes
  (-K_X)  \bigr)^{\vee \vee},
  K_X\Bigr)
  =\Hom\Bigl(-K_X,\bigl(\theta_X^{\vee\vee}\otimes (K_X)^{\vee
  \vee} \Bigr)\\ =\Hom \bigl(-K_X, \Omega^{1\,\vee\vee}_X \bigr)=(0)
\end{multline*}
where vanishing of the last group follows from the Bogomolov--Sommese
vanishing theorem for varieties with log canonical singularities (see~\cite[7.2]{MR2854859} or~\cite[8.3]{MR2581247}).
 \end{proof}

\begin{lem}
  \label{lem:2}
  Let $P$ be a Fano polygon, let $(h,f)$ be mutation data for
  $P$, and let $P^\prime$ be the mutated polygon. There is a qG-pencil
  $g\colon \mathcal{X} \to \PP^1$ with scheme-theoretic fibres $g^* (0) = X_P$ and
  $g^* (\infty) = X_{P^\prime}$.
\end{lem}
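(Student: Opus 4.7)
Plan: The plan is to realize $\mathcal{X}$ as a projective $T$-variety of complexity one over $\PP^1$ in the sense of Altmann--Hausen, where $T \cong \CC^\times$ is a natural one-parameter subtorus of the toric torus of $X_P$ (and of $X_{P'}$) determined by the mutation data $(h,f)$. The mutation data encode precisely the combinatorial input needed for an Altmann--Hausen polyhedral divisor $\mathcal{D}$ on $\PP^1$.

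To construct $\mathcal{D}$, set $N' := N/\ZZ f$, a rank-one lattice. Since $\langle h,f\rangle = 0$, the height function $h$ descends to a linear map $N'_\RR \to \RR$. Slicing $P$ along $h$ produces polyhedra $P_t \subset N'_\RR$ indexed by $t \in [\hmin,\hmax]$, and analogously one obtains $P'_t$ from $P'$. The mutation formulas imply that $P_t = P'_t$ for $t \leq 0$, while for $t > 0$ the polygon $P'_t$ is the translate of $P_t$ by $tf$, so $P_t$ and $P'_t$ have the same image in $N'_\RR$. This common quotient slicing assembles into a polyhedral divisor $\mathcal{D}$ on $\PP^1$ supported at $\{0,\infty\}$: the coefficient $\Delta_0$ encodes the upper half $P \cap \{h \geq 0\}$, the coefficient $\Delta_\infty$ encodes the upper half $P' \cap \{h \geq 0\}$, and elsewhere the coefficient is trivial. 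The interpretation of polygon mutation via polyhedral divisors in Ilten's work on mutations of Laurent polynomials provides the template.

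Setting $\mathcal{X} := \mathrm{TV}(\mathcal{D})$ produces a normal projective 3-fold with an effective $T$-action and a proper $T$-invariant morphism $g\colon \mathcal{X} \to \PP^1$, identified with the $T$-quotient map. By the fibre theorem for complexity-one $T$-varieties, the scheme-theoretic fibre $g^*(p)$ at $p \in \PP^1$ is a toric surface whose fan is built from the polyhedral coefficient at $p$ together with the tail cone of $\mathcal{D}$; a combinatorial check identifies this fan with $\Sigma_P$ at $p = 0$ and with $\Sigma_{P'}$ at $p = \infty$, giving $g^*(0) = X_P$ and $g^*(\infty) = X_{P'}$ with their reduced scheme structures. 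The total space $\mathcal{X}$ has only cyclic quotient singularities (the local structure of a complexity-one $T$-variety cut out from polyhedral data), hence is $\QQ$-Gorenstein, so $K_{\mathcal{X}/\PP^1} = K_\mathcal{X} - g^* K_{\PP^1}$ is $\QQ$-Cartier and $g$ is the required qG-pencil.

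The main obstacle is the combinatorial matching: setting up $\mathcal{D}$ precisely enough that the Altmann--Hausen construction produces a 3-fold whose fibres coincide scheme-theoretically with $X_P$ and $X_{P'}$. A direct toric construction via a 3-fold fan in $N \oplus \ZZ$ is tempting but fails in general, because the horizontal slice of such a fan would be forced to be a common refinement of $\Sigma_P$ and $\Sigma_{P'}$, so the special fibres would be blow-ups of $X_P$ and $X_{P'}$ rather than the surfaces themselves. The complexity-one $T$-variety framework avoids this precisely because the generic fibre of $g$ is allowed to be a non-toric $\CC^\times$-surface interpolating between $X_P$ and $X_{P'}$ along the mutation.
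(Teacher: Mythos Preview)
Your construction of the pencil via a polyhedral divisor on $\PP^1$ is essentially Ilten's approach, which the paper cites explicitly: ``Without the conclusion that the pencil is qG, this statement was proved by Ilten.''  So the existence of the pencil is not at issue; the whole point of the Lemma is the qG property, and that is where your argument has a genuine gap.

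You write: ``The total space $\mathcal{X}$ has only cyclic quotient singularities \ldots\ hence is $\QQ$-Gorenstein, so $K_{\mathcal{X}/\PP^1} = K_\mathcal{X} - g^* K_{\PP^1}$ is $\QQ$-Cartier and $g$ is the required qG-pencil.''  This conflates two different notions.  Having $K_{\mathcal{X}/\PP^1}$ be $\QQ$-Cartier means that \emph{some} multiple is Cartier; the qG condition in the sense of Koll\'ar--Shepherd-Barron requires that $rK_{\mathcal{X}/S}$ be relatively Cartier, where $r$ is the Gorenstein index of the \emph{fibre} at the point in question.  Equivalently, the local index-$r$ cover of the fibre singularity must deform flatly.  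A flat family with $\QQ$-Gorenstein total space need not satisfy this: the versal deformation of a $T$-singularity has components other than the qG one.  So the implication ``$\QQ$-Gorenstein total space $\Rightarrow$ qG family'' fails, and you have not established the Lemma.  (You also assert without proof that the complexity-one $T$-variety $\mathcal{X}$ has only cyclic quotient singularities; this is not automatic from the polyhedral data and would itself need checking.)

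The paper closes this gap by a different and more explicit route.  Rather than invoking the abstract $T$-variety machinery, it builds a concrete toric $3$-fold $\widetilde{X}$ from a rational polytope $\widetilde{Q}\subset (M\oplus\ZZ)_\RR$ whose two projections to $M_\RR$ recover the dual polygons $Q$ and $Q'$, and writes the pencil as an explicit homogeneous trinomial
\[
xy + Az^{w}t^{w'-r'} + Bz^{w-r}t^{w'}
\]
in Cox coordinates for $\widetilde{X}$.  The payoff of this concreteness is that in each simplicial toric chart one can read off the local form of the family as
\[
(xy + Az^{w} + Bz^{w-r}=0)\subset \tfrac{1}{r}(1,aw-1,a),
\]
with $A,B$ units, which is visibly the miniversal qG family of the fibre singularity.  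This local identification is exactly what your argument is missing; if you want to stay within the polyhedral-divisor framework you would still need to carry out an equivalent chart-by-chart verification that the induced deformation of each fibre singularity lands on the qG component.
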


\noindent  Without the conclusion that the pencil is qG, this statement was proved by Ilten~\cite{MR2958983}.  

\begin{proof}[Proof of Lemma~\ref{lem:2}]
 Let $\widetilde{M}=M\oplus \ZZ$ and denote elements $\tilde{u} \in \widetilde{M}$ by $(u,z) \in M \oplus \ZZ$. Let $\pi\colon \widetilde{M}\to M$ be the 
projection to the first factor and define $\pi^\prime \colon
\widetilde{M}\to M$ by $\pi^\prime (u, z) = u+zh$.  We will construct by explicit inequalities a convex rational polytope
$\widetilde{Q}\subset \widetilde{M}_\RR$ such that $\pi
(\widetilde{Q})=Q$ and $\pi^\prime (\widetilde{Q})=Q^\prime$, where $Q$ (respectively $Q'$) is the polygon dual to $P$ (respectively to $P'$). Denoting by
$\widetilde{X}$ the toric variety defined by the normal fan of
$\widetilde{Q}$, this gives embeddings $X_P\subset \widetilde{X}$ and
$X_{P^\prime}\subset \widetilde{X}$. We will conclude the proof by
writing an explicit homogeneous trinomial
\begin{equation}
  \label{eq:1}
xy+Az^wt^{w^\prime-r^\prime}+Bz^{w-r}t^{w^\prime}  
\end{equation}
in Cox coordinates for $\widetilde{X}$ such that
\begin{align}
  \label{eq:2}
X_P=\{xy+Az^wt^{w^\prime-r^\prime}=0\}
&& \text{and} && 
X_{P^\prime}=\{xy+Bz^{w-r}t^{w^\prime}=0\}  
\end{align}
and checking explicitly that it gives the desired qG-deformations. 

 Denote by $v_j\in Q$ the vertex corresponding to the edge $[\rho_j,
 \rho_{j+1}]\subset P$, and let $E_i = [\rho_i,\rho_{i+1}]$ be as in the definition of mutation (page~\pageref{def:mutation}).  Let $J = \{1,2,\ldots,m\} \setminus \{1,i,i+1\}$. Consider the following elements of
 $\widetilde{N}=N\oplus \ZZ$:  
 \begin{align*}
   \tilde{\rho}_x & = (f,1) \\
   \tilde{\rho}_y & = (0,1)\\
   \tilde{\rho}_z & = \left(\rho_i,\frac{1+\langle
     \rho_i,v_{i+1}\rangle}{\langle f, v_{i+1} \rangle} \right)=(\rho_i, -w)\\
   \tilde{\rho}_t & = \left(\rho_1,\frac{1+\langle
     \rho_1,v_{m}\rangle}{\langle f, v_{m} \rangle} \right)=(\rho_1,
   -w^\prime+r^\prime)\\ 
   \tilde{\rho}_j & = 
   \begin{cases}
     (\rho_j, 0) &\text{if $\langle \rho_j,h\rangle \geq 0$} \\
     (\rho^\prime_j, \langle \rho_j, h\rangle) &\text{if $\langle \rho_j,h\rangle < 0$}
   \end{cases}
                  &&
                     \text{for $j \in J$}
 \end{align*}
and let $\widetilde{Q}\subset \widetilde{M}_\QQ$ be the rational
polytope consisting of those $\tilde{u}\in \widetilde{M}$ that satisfy
the inequalities $\langle \tilde{\rho}_x, \tilde{u}\rangle \geq 0$, $\langle \tilde{\rho}_y, \tilde{u}\rangle \geq 0$, $\langle \tilde{\rho}_z, \tilde{u}\rangle \geq -1$, $\langle \tilde{\rho}_t, \tilde{u}\rangle \geq -1$, 
and $\langle \tilde{\rho}_j , \tilde{u}\rangle \geq -1$ for $j\in J$.  Let $\widetilde{X}$ be the toric variety defined by the normal fan of
$\widetilde{Q}$ and denote the corresponding Cox co-ordinates by $x$,~$y$,~$z$,~$t$,~$a_j$ for $j \in J$. It is
essentially immediate from the definition that $\pi(\widetilde{Q})=Q$
and $\pi^\prime (\widetilde{Q})=Q^\prime$. Consider the trinomial in
\eqref{eq:1} where:
\begin{align*}
  A=\prod_{j \in J : \langle \rho_j,
  h\rangle<0}a_j^{-\langle \rho_j, h\rangle}
  && \text{and} &&
                   B=\prod_{j \in J :  \langle \rho_j,
                   h\rangle>0}a_j^{\langle \rho_j, h\rangle}
\end{align*}
Noting that $\Ker \pi$ is generated by $(0,1)$ and $\Ker \pi^\prime$
by $(-h,1)$, it is easy to see that the trinomial in question is
homogeneous.  This also makes it clear that \eqref{eq:2}
holds. 

Finally we check that the trinomial induces the desired
qG-deformations. Choose orientation and coordinates such that
$\rho_i=(0,1)$, $\rho_{i+1}=(1,0)$ and $N=\ZZ^2+\frac1{n}(1,q)$. As
before, write $q=p-1$, $w=\hcf(n,p)$, $n=wr$, $p=wa$. It is easy
to see that with these choices $M=\bigl\{(u_1,u_2)\in\ZZ^2\mid u_1+qu_2\equiv 0\pmod{n}\bigr\}$, 
$h=(-r,-r)\in M$, and 
$f=\bigl(\frac{1}{w},-\frac{1}{w}\bigr)\in N$.  We analyze the family determined by \eqref{eq:1} in the toric charts on $\widetilde{X}$.  It suffices to consider the simplicial cone $\sigma$ in $\widetilde{N}$ generated by
the vectors
\begin{align*}
  \ve_0=\tilde{\rho}_x=\begin{pmatrix}\frac{1}{w}\\-\frac{1}{w}\\1\end{pmatrix} 
&&
   \ve_1=\tilde{\rho}_y=\begin{pmatrix}0\\0\\1\end{pmatrix}
&&
   \ve_2=\tilde{\rho}_z=\begin{pmatrix}0\\1\\-w\end{pmatrix}
\end{align*}
in $\widetilde{N}=N\oplus \ZZ$. The calculation:
\[
\frac{1}{n}
\begin{pmatrix}
  1 \\ q \\ 0
\end{pmatrix} =
\frac{1}{wr}
\begin{pmatrix}
  1 \\ wa-1 \\ 0
\end{pmatrix}
=\frac1{r}\ve_0-\frac1{r}\ve_1
+\frac{a}{r}\ve_2+\frac{wa}{r}\ve_1
\]
shows that the singularity in $\widetilde{X}$ corresponding to $\sigma$ is $\frac1{r}(1,wa-1,a)$, and that the trinomial~\eqref{eq:1} gives the expected
qG-deformation
\[
(xy+A z^w+B z^{w-r}=0) \subset \textstyle \frac1{r}(1, aw-1, a)
\]
where $A$ and $B$ are now units in the local ring at the singularity.
\end{proof}

\begin{proof}[Proof of Theorem~\ref{thm:1}]
  It follows from Lemma~\ref{lem:1} that the singularities of $X$ are
  exactly the R-contents of the singularities of the toric surface
  $X_P$, thus $X$ has locally qG-rigid singularities as claimed. By
  Lemma~\ref{lem:2}, if $P^\prime$ is mutation equivalent to $P$ then
  the toric surface $X_{P^\prime}$ is qG-deformation equivalent to
  $X_P$, and then a generic qG-deformation of $X_{P^\prime}$ is
  qG-deformation equivalent to a generic $qG$-deformation of
  $X_P$. Thus we get a (set-theoretic) map $\frak{P} \to
  \frak{F}$ as in the statement. The map is surjective by definition
  of the class TG.
\end{proof}

As a corollary, we can give a new, geometric proof that the singularity content of $P$ is invariant under
mutation. Let $X$ be a generic deformation of $X_P$.
Lemma~\ref{lem:1} implies that $X$ is locally qG-rigid and that the
multiset of singularities of $X$ is $\cB$. It is easy to see that
$m=e(X^0)$ is the homological Euler number of the smooth locus
   $X^0$ of $X$. Thus the singularity content of $P$ is a
   diffeomorphism invariant of $X$.  By Lemma~\ref{lem:2}, if
   $P^\prime$ is mutation equivalent to $P$ and $X^\prime$ is a
   generic 
   qG-deformation of $X_{P^\prime}$, then $X^\prime$ is a
   qG-deformation of $X$.  Lemma~\ref{lem:1} now implies that we can
   qG-deform $X$ to $X^\prime$ through locally qG-rigid surfaces,
   hence $X^\prime$ is diffeomorphic to $X$.  Thus the singularity
   content of $P'$ coincides with that of $P$.

\begin{figure}[htbp]
  \centering
  \captionsetup{width=0.78\textwidth}
  \begin{subfigure}[b]{0.2\textwidth}
    \centering
    \includegraphics[scale=0.6]{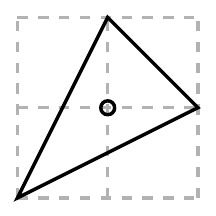}
    \subcaption*{$\PP^2$}
  \end{subfigure}
  \begin{subfigure}[b]{0.2\textwidth}
    \centering
    \includegraphics[scale=0.6]{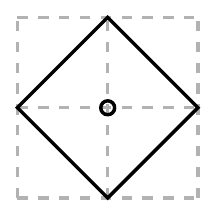}
    \caption*{$\PP^1 \times \PP^1$}
  \end{subfigure}
  \begin{subfigure}[b]{0.2\textwidth}
    \centering
    \includegraphics[scale=0.6]{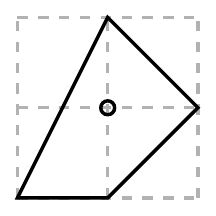}
    \caption*{$F_1$}
  \end{subfigure}
  \begin{subfigure}[b]{0.2\textwidth}
    \centering
    \includegraphics[scale=0.6]{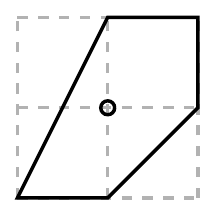}
    \caption*{$S^2_7$}
  \end{subfigure} \\[2.4ex]
  \begin{subfigure}[b]{0.2\textwidth}
    \centering
    \includegraphics[scale=0.6]{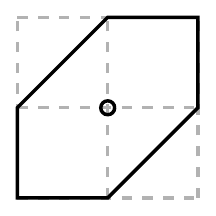}
    \subcaption*{$S^2_6$}
  \end{subfigure}
  \begin{subfigure}[b]{0.2\textwidth}
    \centering
    \includegraphics[scale=0.6]{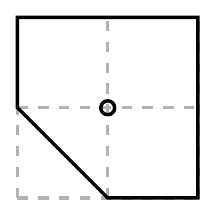}
    \caption*{$S^2_5$}
  \end{subfigure}
  \begin{subfigure}[b]{0.2\textwidth}
    \centering
    \includegraphics[scale=0.6]{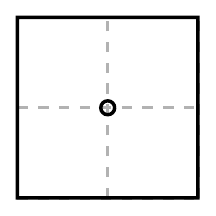}
    \caption*{$S^2_4$}
  \end{subfigure}
  \begin{subfigure}[b]{0.2\textwidth}
    \centering
    \includegraphics[scale=0.6]{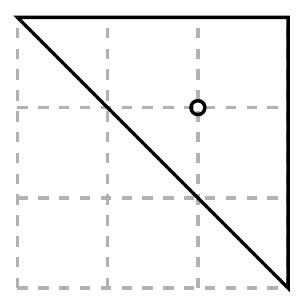}
    \caption*{$S^2_3$}
  \end{subfigure} \\[2.4ex]
  \hspace{8mm}
  \begin{subfigure}[b]{0.246\textwidth}
    \centering
    \includegraphics[scale=0.6]{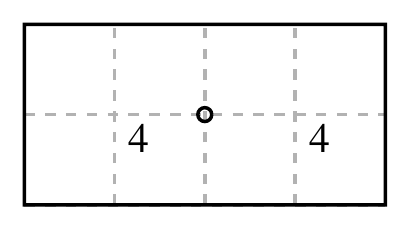}
    \subcaption*{$S^2_2$}
  \end{subfigure}  
  \hspace{2.24mm}
  \begin{subfigure}[b]{0.554\textwidth}
    \centering
    \includegraphics[scale=0.6]{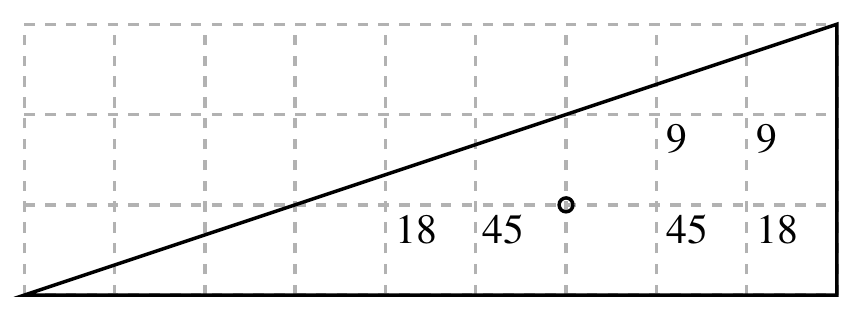}
    \subcaption*{$S^2_1$}
  \end{subfigure}  
  \caption{Representatives of the 10 mutation-equivalence classes of Fano polygons with singularity content $(n,\varnothing)$, labelled by the del~Pezzo surfaces to which they correspond under Conjecture~\ref{conjecture:A}.  Coefficients on interior lattice points specify maximally-mutable Laurent polynomials: see the main text.}
  \label{fig:smooth}
\end{figure}

\section*{The Evidence}
\label{sec:evidence}

\noindent We can prove our conjectures in the simplest cases, as we now explain.  

\subsection*{The Smooth Case}  It is well-known that there are precisely~10 deformation families of smooth del~Pezzo surfaces.  All of them are of class TG.  Fano polygons $P$ such that $X_P$ qG-deforms to a smooth del~Pezzo surface must have singularity content $(n,\varnothing)$ for some integer $n$.  Kasprzyk--Nill--Prince~\cite{KNP14} give an algorithm for classifying Fano polygons with given singularity content up to mutation, and thereby show that there are precisely~10 mutation-equivalence classes of Fano polygons with singularity content $(n,\varnothing)$ for some $n$.  These are illustrated in Figure~\ref{fig:smooth}.  Each such polygon supports a unique maximally-mutable Laurent polynomial~\cite{KT14}: these have zero as the coefficient of the constant monomial, coefficients of $(1+x)^k$ on each edge of length~$k$, and other coefficients as shown in Figure~\ref{fig:smooth}.  Combining the (known) classification of smooth del~Pezzo surfaces up to qG-deformation equivalence, the classification of the relevant polygons up to mutation-equivalence~\cite{KNP14}, and the computation of quantum periods $G_X$ for smooth del~Pezzo surfaces $X$~\cite[\S G]{QC105}, it is easy to see that Conjectures~\ref{conjecture:A},~\ref{conjecture:B},~\ref{conjecture:C}, and~\ref{conjecture:D} hold.

\subsection*{The Simplest Non-Smooth Case}

The simplest residual singularity is $\frac1{3}(1,1)$, so we consider now del~Pezzo surfaces with isolated singularities of this type only.  Such surfaces have been classified up to qG-deformation equivalence by Corti--Heuberger in~\cite{corti-heuberger14}:

\begin{thm}
  \label{thm:2}
  There are precisely $29$ qG-deformation families of del~Pezzo surfaces
  with $k\geq 1$ singular points of type $\frac1{3}(1,1)$, and precisely $26$ of
  these are of class TG.
\end{thm}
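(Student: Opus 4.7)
The plan is to pass to the minimal resolution $\pi\colon Y \to X$. Each $\frac{1}{3}(1,1)$ singularity resolves to a single smooth rational $(-3)$-curve $E_i$, $i=1,\ldots,k$, with discrepancy $-\frac{1}{3}$, so $K_Y = \pi^*K_X - \frac{1}{3}\sum E_i$ and $K_Y^2 = K_X^2 - k/3$. Since $X$ is a klt Fano, $Y$ is a smooth rational surface, and by Lemma~\ref{lem:1} the singularities of $X$ are qG-rigid. Hence the qG-deformation class of $X$ is encoded by the smooth deformation class of the pair $(Y,\sum E_i)$, or equivalently by $Y$ equipped with the nef and big $\QQ$-divisor $L := -\pi^*K_X = -K_Y - \frac{1}{3}\sum E_i$ satisfying $L \cdot E_i = 0$, $3L \in \Pic(Y)$, and $L\cdot C > 0$ for every curve $C \ne E_i$.

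I would classify such pairs via a relative MMP. The $E_i$ are $K_Y$-positive, so contract successively $(-1)$-curves of $Y$ disjoint from $\bigcup E_i$ until reaching a surface $Y_0$ having no such $(-1)$-curve. Then $Y_0$ is either $\PP^2$, a Hirzebruch surface $\FF_n$, or one of these blown up only at points meeting $\bigcup E_i$. Boundedness of klt del~Pezzo surfaces with fixed singularity content, together with Noether's identity $K_Y^2 + \rho(Y) = 10$ for smooth rational $Y$, reduces the possibilities to a finite list; reversing the blowups and enumerating configurations of $k$ disjoint $(-3)$-curves on each $Y_0$ should yield exactly the $29$ families.

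For the class TG dichotomy, for each family I would attempt to construct a qG-degeneration to a toric del~Pezzo. A systematic source is the enumeration of Fano polygons with singularity content $(m, \cB)$ where $\cB$ consists of $k$ copies of $\frac{1}{3}(1,1)$: by Theorem~\ref{thm:1} each such polygon produces a class TG family, and the Kasprzyk--Nill--Prince algorithm makes the enumeration explicit. Matching each polygon with the corresponding geometric family should account for the expected $26$ class TG families. For the three remaining surfaces I would produce a numerical obstruction in the spirit of the $X_{6,6}\subset \PP(2,2,3,3,3)$ example, for instance by showing that $h^0(X,-mK_X)$ is too small for suitable $m$ to admit an embedding of any toric degeneration.

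The hardest step will be the enumeration. One must run the MMP carefully enough to count qG-deformation classes rather than mere smooth-pair deformation classes, and eliminate overcounting coming from Cremona-type symmetries or from automorphisms permuting the $E_i$. A secondary difficulty is the three class~TG non-existence proofs: these look case-by-case, and identifying the right cohomological obstruction in each case is delicate.
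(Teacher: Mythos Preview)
The paper does not prove Theorem~\ref{thm:2}; it is quoted as a result of Corti--Heuberger~\cite{corti-heuberger14}, with the remark that the classification can also be derived from Fujita--Yasutake~\cite{FY14}. Corti--Heuberger construct a generic member of each family explicitly as a complete intersection in a toric orbifold or weighted flag variety, and determine which families are of class~TG directly from those models.

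Your outline is broadly in the spirit of how such classifications (including Fujita--Yasutake's for log del~Pezzo surfaces of small index) proceed: pass to the minimal resolution, run an MMP, and enumerate configurations of disjoint $(-3)$-curves on the resulting models subject to the ampleness constraint on $-\pi^*K_X$. But what you have written is a plan, not a proof: the entire content of the theorem lies in the enumeration, which you have not carried out. Your termination statement ``$Y_0$ is $\PP^2$, $\FF_n$, or one of these blown up only at points meeting $\bigcup E_i$'' is not yet a finite list, and the actual case analysis---tracking how the $E_i$ transform under contractions, bounding the number of blow-ups, and checking which configurations give an ample $L$---is substantial and delicate.

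There is also a logical wrinkle in your class~TG argument. You propose to identify the $26$ TG families by matching against the Kasprzyk--Nill--Prince polygon enumeration (Theorem~\ref{thm:3}), but in the paper Theorems~\ref{thm:2} and~\ref{thm:3} are established independently, and their agreement is precisely the \emph{evidence} offered for Conjecture~\ref{conjecture:A}; invoking Theorem~\ref{thm:3} to prove the TG count in Theorem~\ref{thm:2} would collapse that structure. In the cited work the TG/non-TG dichotomy is settled family by family, by exhibiting explicit toric degenerations for the $26$ and specific obstructions for the remaining three; your $h^0(X,-mK_X)$ idea is the right flavour (cf.\ the $X_{6,6}\subset\PP(2,2,3,3,3)$ example) but would need to be made concrete for each of the three exceptions.
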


\noindent The classification result here can be derived from Fujita--Yasutake~\cite{FY14}. Corti--Heuberger also give an explicit construction of
  a generic surface in each family as a complete intersection in a toric orbifold or weighted flag variety, and determine exactly which of the families are of class TG.

Fano polygons $P$ such that $X_P$ qG-deforms to a singular del~Pezzo surface with only $\frac1{3}(1,1)$ singularities must have singularity content $\big(n,\{k \times \frac1{3}(1,1)\}\big)$ for some integers $n \geq 0$ and $k \geq 1$.  Such polygons have been classified up to mutation-equivalence by Kasprzyk--Nill--Prince in~\cite{KNP14}:

\begin{thm}
  \label{thm:3}
  There are precisely~$26$ mutation-equivalence classes of Fano
  polygons with singularity content $\bigl( n,\{k\times
  \frac1{3}(1,1)\}\bigr)$ for some integer~$n$ and some positive integer~$k$.
\end{thm}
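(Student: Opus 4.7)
The plan is to classify Fano polygons with singularity content $\bigl(n,\{k \times \frac1{3}(1,1)\}\bigr)$ in two stages: first bound the admissible pairs $(n,k)$ to a finite list, then for each such pair enumerate mutation-equivalence classes by reducing to minimal representatives.

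First, I would use the Akhtar--Kasprzyk degree formula, which relates $K_{X_P}^2$ to the singularity content of $P$ through explicit positive contributions from each primitive $T$-singularity (there are $n$ of them) and from each residual $\frac1{3}(1,1)$ (there are $k$ of them). Since $-K_{X_P}$ is ample we have $K_{X_P}^2 > 0$, and this forces both $n$ and $k$ into a finite range. (For the smooth case $k = 0$ this recovers the bound that gives the ten mutation classes of Figure~\ref{fig:smooth}.)

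Second, for each admissible $(n,k)$ I would reduce to a \emph{minimal} representative in each mutation class, following the Kasprzyk--Nill--Prince strategy. Fix a complexity measure, such as the lattice width of $P$ or the number of lattice points $|P \cap N|$, and call $P$ minimal if no combinatorial mutation strictly decreases it. Every mutation class contains a minimal member by descent. I would then enumerate minimal polygons by their edge sequences: the singularity content fixes the residue class of each edge length modulo the local Gorenstein index $r$ (an edge carrying a $\frac1{3}(1,1)$ residual has $r=3$ and length $\equiv 1 \pmod 3$; a $T$-edge of local index $r$ has length $\equiv 0 \pmod r$), and the remaining freedom is controlled by $GL(2,\ZZ)$-equivalence together with the requirement that $0 \in N$ lie strictly inside $P$. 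This reduces the enumeration to a finite combinatorial search, readily automated.

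The main obstacle I expect is the last step: showing that the minimal polygons produced are pairwise \emph{not} mutation-equivalent. Singularity content provides no information here by construction, so finer invariants are needed. I would bring in the multiset of edge widths, the Ehrhart series of the dual polygon $Q$, and ultimately the classical period $\pi_P$ viewed as a function on $L_P^T$; these invariants must be shown to separate the list of minimal polygons. Once this is done the enumeration is complete, and matching the resulting count against the $26$ class TG families of Theorem~\ref{thm:2} provides a striking consistency check with Conjecture~\ref{conjecture:A}, although of course Theorem~\ref{thm:3} must be proved independently since the injectivity half of that Conjecture remains open.
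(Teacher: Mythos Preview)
The paper does not supply its own proof of this theorem; it is quoted from Kasprzyk--Nill--Prince~\cite{KNP14}. Your two-stage strategy---bound $(n,k)$ via the degree formula $K_{X_P}^2 = 12 - n - \tfrac{5}{3}k > 0$, then enumerate minimal representatives in each mutation class---is indeed the shape of their argument, so your proposal matches the source the paper is citing.

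One correction in your separation step. The multiset of edge widths is \emph{not} a mutation invariant, since mutation by construction replaces an edge by one of different width; two distinct minimal polygons could in principle be joined by a chain of mutations that first increases and then decreases your complexity measure, so differing edge data does not on its own rule out mutation equivalence. The Ehrhart series of the dual \emph{is} a mutation invariant, but it is determined by the singularity content and hence cannot separate polygons with the same $(n,k)$. Of the invariants you list, only the classical period $\pi_P$ as a function on $L_P^T$ can do the job, and that is exactly what the paper uses later to separate $P_{12}$ from $P_{13}$ and $P_{21}$ from $P_{22}$ when verifying Conjecture~C for this basket. The Kasprzyk--Nill--Prince argument handles separation rather differently: their notion of minimality comes with a structural result constraining mutation-equivalent minimal polygons up to $GL(2,\ZZ)$, so the final step is a direct comparison of polygons rather than a period computation.
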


\noindent The qG-deformation classes in Theorem~\ref{thm:2} and the mutation-equivalence classes in Theorem~\ref{thm:3} are in one-to-one correspondence, and Conjecture~\ref{conjecture:A} holds.  Kasprzyk--Tveiten have shown that each Fano polygon in Theorem~\ref{thm:3} supports a unique $k$-dimens\-ional family of maximally-mutable Laurent polynomials~\cite{KT14}; these have $T$-binomial edge coefficients.  Regarding Conjecture~\ref{conjecture:B}, one should bear in mind that computing the quantum period of orbifolds is a hard problem in Gromov--Witten theory: the constructions of Corti--Heuberger are at the limit of what can be treated using currently-available techniques.  Nonetheless Oneto--Petracci~\cite{OP14} have proved:

\begin{thm}
  \label{thm:4}
  Assuming natural generalizations of the Quantum Lefschetz Hyperplane Principle and the Abelian/non-Abelian Correspondence to the orbifold setting, for each of the $26$ families of class TG in Theorem~\ref{thm:2},
  there are Fano polygons $P$ and points $a_0\in L_P^T$ and $x_0\in
  H_X^{\text{ts}}$ such that:
  \[
  \widehat{G}_\sX (x_0,t)=\pi_P(a_0,t)
  \]
\end{thm}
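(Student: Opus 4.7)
The plan is to proceed family by family through the 26 families of class TG surfaces in Theorem~\ref{thm:2}. For each such family we exploit the explicit model given by Corti--Heuberger, in which a generic $X$ appears as a complete intersection either in a toric Deligne--Mumford stack $\mathcal{Y}$ or in a weighted flag variety, and compute the regularized quantum period $\widehat{G}_\sX$. The toric case is handled by the (assumed) orbifold Quantum Lefschetz Hyperplane Principle: one writes down the small $I$-function of $\mathcal{Y}$, a hypergeometric generating function encoding the stacky fan data of $\mathcal{Y}$ in the usual way, twists it by the hypergeometric Euler-class factors attached to the line bundles cutting out $X$, extracts the fundamental-class component to recover the small $J$-function of $\sX$, reads off the quantum period $G_\sX(x,t)$, and regularizes by $c_d\mapsto d!\,c_d$. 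The weighted-flag-variety case is reduced to the toric case in two steps: the orbifold Abelian/non-Abelian Correspondence identifies the relevant twisted $J$-function of the non-Abelian GIT quotient with an anti-symmetrization of the $J$-function of the ambient Abelian toric quotient, and one then applies orbifold Quantum Lefschetz as before.

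On the mirror side, Theorem~\ref{thm:3} supplies, for each family, a Fano polygon $P$, and Kasprzyk--Tveiten show that $L_P^T$ is an affine space of the expected dimension $k$. For fixed $a\in L_P^T$ the classical period admits the elementary description
\[
\pi_P(a,t)=\sum_{d\ge 0}\bigl[\text{constant term of }g(a,x)^d\bigr]\,t^d,
\]
which for explicit $P$ becomes, in each degree, a finite combinatorial sum over lattice points of $dP\cap N$. Since Theorem~\ref{thm:4} asks only for existence of a single pair $(a_0,x_0)$ at which the two series coincide, one has considerable freedom: the strategy is to take $a_0$ to be a distinguished origin of $L_P^T$ (e.g.\ mutable parameters set to zero, $T$-binomial coefficients fixed), and to determine $x_0\in H_X^{\text{ts}}$ from the requirement that the coefficients of $t$ and $t^2$ in $\widehat{G}_\sX$ and $\pi_P$ agree. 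With this choice one then compares coefficients of $t^d$ for every $d$; in practice the hypergeometric sum over $\beta\in H_2(X;\QQ)$ coming from the $I$-function matches, after the $d!$-regularization, the combinatorial sum over lattice points of $P$ at height~$d$ via a standard manipulation of Pochhammer symbols against multinomial coefficients.

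The main obstacle is the status of the two inputs themselves. The orbifold Quantum Lefschetz Hyperplane Principle in the generality required here, allowing twisted-sector insertions and permitting non-convex obstruction bundles arising from the presence of negative line-bundle summands, and the orbifold Abelian/non-Abelian Correspondence are not currently available in full rigour; this is precisely why the theorem takes them as hypotheses. Granting them, the remaining work reduces, for each of the 26 families, to organised hypergeometric manipulation followed by a finite termwise verification. The families realised as complete intersections in weighted flag varieties, where both tools must be invoked simultaneously, constitute the technically most delicate part of the case analysis and dominate the length of the proof; the toric complete intersections, by contrast, are essentially a direct computation and serve as a sanity check on the overall identification of $a_0$ with $x_0$.
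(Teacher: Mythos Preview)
The paper does not supply its own proof of Theorem~\ref{thm:4}: the result is attributed to Oneto--Petracci~\cite{OP14}, and the paper merely quotes the statement as evidence for Conjecture~B. There is therefore no in-paper argument against which to compare your proposal.

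That said, your outline is consistent with the strategy signalled by the surrounding text: use the explicit Corti--Heuberger models, compute quantum periods via the (assumed) orbifold Quantum Lefschetz and Abelian/non-Abelian Correspondence, and compare against constant-term expansions of the maximally-mutable Laurent polynomials on the Kasprzyk--Nill--Prince polygons. Two points in your write-up deserve tightening. First, the phrase ``finite termwise verification'' is misleading: $\widehat{G}_\sX$ and $\pi_P$ are formal power series in $t$, and equality cannot be established by checking finitely many coefficients; what is actually required, family by family, is a closed-form hypergeometric identity valid in every degree, and this is where the substantive work in~\cite{OP14} lies. Second, ``extracts the fundamental-class component to recover the small $J$-function'' suppresses the mirror-map step: in the orbifold setting with twisted-sector insertions the passage from the twisted $I$-function to the $J$-function generally involves a nontrivial change of variables, and it is precisely this change of variables that produces the point $x_0\in H_X^{\text{ts}}$ rather than something you are free to choose.
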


\noindent This is a substantial step towards
Conjecture~\ref{conjecture:B} for this class of del~Pezzo surfaces.

Conjectures~\ref{conjecture:C} and~\ref{conjecture:D} also hold for
this class of del~Pezzo surfaces. In fact, we see from the classification that, in most cases, knowing the singularity content allows us to recover
the polygon. The four exceptions are: polygons $P_{12}$ and $P_{13}$ with
singularity content $\bigl(6, \{2\times \frac1{3}(1,1)\}\bigr)$, and
polygons $P_{21}$ and $P_{22}$ with singularity content $\bigl(5,
\{\frac1{3}(1,1)\}\bigr)$. The Laurent polynomials:
\begin{align*}
g_{12} & =x^{-3}y+6x^{-2}y+15x^{-1}y+20y+15xy+6x^2y+x^3y+ax^{-1}+bx+y^{-1}\\
g_{13  }&=x^{-1}y^{-1}+3y^{-1}+3xy^{-1}+x^2y^{-1}+3x^{-1}+a^\prime
           x+3x^{-1}y+b^\prime y+xy+x^{-1}y^2
\end{align*}
are the general maximally-mutable Laurent polynomials with Newton polygons $P_{12}$ and
$P_{13}$. A calculation shows that:
\begin{multline*}
\pi_{P_{12}}(a,b,t)=\pi_{g_{12}}(a,b,t)\\= 1+ (2ab + 40)t^2+(90a + 90b)t^3+
    (6a^2b^2 + 72a^2 + 480ab + 72b^2 + 5544)t^4 +\cdots  
\end{multline*}
and:
\begin{multline*}
\pi_{P_{13}}(a^\prime,b^\prime,t)=\pi_{g_{13}}(a^\prime,b^\prime,t)\\= 
1+(6a^\prime + 6b^\prime + 20)t^2+(6a^\prime b^\prime + 54a^\prime +
54b^\prime + 168)t^3+\\+
(90a^{\prime \,2} + 216a^\prime b^\prime + 900a^\prime + 90b^{\prime
  \, 2} + 900b^\prime + 2220)t^4+\cdots
\end{multline*}
It is immediate from these expressions that there is no affine-linear
isomorphism relating $a,b$ to $a^\prime, b^\prime$ that transforms
$\pi_{P_{12}}$ to $\pi_{P_{12}}$. A similar analysis establishes the
corresponding statement for $\pi_{P_{21}}$ and $\pi_{P_{22}}$.  This
proves Conjecture~C for del~Pezzo surfaces with only isolated singularities of type $\frac1{3}(1,1)$.

As for Conjecture~D for these surfaces, again, with the same four exceptions, the
qG-deformation type is determined by the degree and the basket of residual
singularities. For instance, the surface
$X_{P_{12}}$ deforms to a sextic in $\PP(1,1,3,3)$, and the surface
$X_{P_{13}}$ deforms to a general member $X$ of the family of hypersurfaces of type $L = (3,3)$
in the Fano simplicial toric variety $F$ with
weight matrix\footnote{The weight matrix defines an action of $(\Cstar)^2$ on $\CC^5$, and $F$ is the Fano GIT quotient of $\CC^5$ by this action. The line bundle $L$ over $F$ is defined by the character $(3,3)$ of $(\Cstar)^2$.}:
\[
\begin{array}{ccccc}
1 & 1 & 1 & 0 & 0  \\
0 & 0 & 1 & 1 & 3   
\end{array}
\]
It is easy to see, using the method of~\cite[Example~9]{CCIT2}, that these surfaces have different quantum
periods. This, together with a similar analysis of $X_{P_{21}}$ and
$X_{P_{22}}$, establishes Conjecture~D for del~Pezzo surfaces with only isolated singularities of type $\frac1{3}(1,1)$.

\subsection*{Classical and Quantum Invariants}

Let $P$ be a Fano polygon with basket of residual singularities
$\cB=\Bigl\{\textstyle\frac{1}{w_{0,j}r_j}(1,a_j w_{0,j}-1) :
\text{$j \in J$}\Bigr\}$. Consider a generic maximally-mutable Laurent
polynomial $f$ with Newton polygon $P$ and $T$-binomial edge
coefficients.  Regard $f$ as a map from $(\CC^\times)^2$ to
$\CC$. Tveiten has shown that a generic fibre
$\Gamma_\eta=f^{-1}(\eta)$ of $f$ is a curve of geometric genus
\[
g(\Gamma_\eta)=1+\sum_{j\in J} \frac{w_{o,j}(r_j-1)}{2}
\] and that
the monodromy endomorphism around $\infty$ acting on
$H_1(\Gamma_\eta, \ZZ)$ determines and is determined by the
singularity content of~$P$~\cite{Tveiten}.  One can think of the
singularity content as `classical information' which, as the examples
of $\PP^1 \times \PP^1$ and the Hirzebruch surface $F_1$ show, is
insufficient to determine the mutation-equivalence class of $P$;
Conjecture~\ref{conjecture:C} then suggests that the `quantum
information' required to determine this mutation-equivalence class is
the space $L_P^T$ of maximally-mutable Laurent polynomials with Newton
polytope $P$ and $T$-binomial edge coefficients.

\section*{Acknowledgements}
\label{sec:acknowledgements}

\noindent The program described here was the content of the PRAGMATIC
2013 Research School in Algebraic Geometry and Commutative Algebra
``Topics in Higher Dimensional Algebraic Geometry'' held in Catania,
Italy, in September~2013.  This paper was largely completed at the EMS School
``New Perspectives on the classification of Fano Manifolds'' held in
Udine, Italy, in September~2014.  We are very grateful to Alfio
Ragusa, Francesco Russo, and Giuseppe Zappal\`a, the organizers of the
PRAGMATIC school, and to Pietro De Poi and Francesco Zucconi, the
organisers of the EMS school, for creating a wonderful atmosphere for
us to work in.  We thank S\'andor Kov\'acs for advice on vanishing
theorems. 

\bibliographystyle{plain}
\bibliography{biblio}
\end{document}